\numberwithin{equation}{section}
\definecolor{refkey}{rgb}{0,0,1}
\definecolor{labelkey}{rgb}{1,0,0}
\newcommand{\ol}{\overline}
\newcommand{\calS}{\mathcal{S}}
\newcommand{\calU}{\mathcal{U}}
\newcommand{\calP}{\mathcal{P}}
\newcommand{\calA}{\mathcal{A}}
\newcommand{\calF}{\mathcal{F}}
\newcommand{\calH}{\mathcal{H}}
\newcommand{\al}{\alpha}
\newcommand{\bt}{\beta}
\newcommand{\ep}{\varepsilon}
\newcommand{\gm}{\gamma}
\newcommand{\eq} [1] {\begin{equation}\label{#1}\quad}
\newcommand{\en} {\end{equation}}
\newcommand{\adj}{\operatorname{adj}}
\newcommand{\tm}{\times}
\newcommand{\sbs}{\subset}
\newcommand{\wdt}{\widetilde}
\newcommand{\iy}{\infty}
\newcommand{\bR}{\mathbb{R}}
\newcommand{\bS}{\mathbb{S}}
\newcommand{\bN}{\mathbb{N}}
\newcommand{\bZ}{\mathbb{Z}}
\newcommand{\bT}{\mathbb{T}}
\newcommand{\bC}{\mathbb{C}}
\newcommand{\bD}{\mathbb{D}}
\newcommand{\bH}{\mathbb{H}}
\newcommand{\bP}{\mathbb{P}}
\newcommand{\mbU}{\mathbf{U}}
\newcommand{\mbo}{\mathbf{0}}
\newcommand{\mbk}{\mathbf{k}}
\newcommand{\mbt}{\mathbf{t}}
\newtheorem{theorem}{\bf  Theorem}[section]
\newtheorem{lemma}{\bf  Lemma}[section]
\newtheorem{proposition}{\bf \sc Proposition}[section]
\newtheorem{corollary}{\bf \sc Corollary}[section]
\newtheorem{definition}{ \sc definition}[section]
\newtheorem{remark}{ \sc Remark}[section]
\subjclass{47A68}
\keywords{Positively definite matrix function, factorization}
\begin{document}
	\begin{center}
		{\bf On multivariable matrix spectral factorization method}\\[5mm]
		Lasha Ephremidze${}^{a,b,*}$\,,\; Ilya M. Spitkovsky${}^a$
\vskip+0.2cm		
{\small			${}^a$ Division of Science and Mathematics, New York University Abu Dhabi (NYUAD), Saadiyat Island,  129188, Abu Dhabi, United Arab Emirates

${}^b$ Razmadze Mathematical Institute of I. Javakhishvili Tbilisi State University, 6. Tamarashvili Str., 0177, Tbilisi, Georgia}
	\end{center}
\let\thefootnote\relax\footnotetext{$*$ Corresponding author.\newline		
.{}$\;\;\;\;\;$ {\em	E-mail addresses:} le23@nyu.edu (L. Ephremidze), ims2@nyu.edu (I. M.  Spitkovsky). }
	\vskip+0.5cm
	{\small{\bf Abstract.} Spectral factorization is a prominent tool with several important applications in various areas of applied science. Wiener and Masani proved the existence of matrix spectral factorization. Their theorem has been extended to the multivariable case by Helson and Lowdenslager. Solving the problem numerically is challenging in both situations, and also important due to its practical applications. Therefore, several authors have developed algorithms for factorization. The Janashia-Lagvilava algorithm is a relatively new method for matrix spectral factorization which has proved to be useful in several applications. In this paper, we extend this method to the multivariable case. Consequently, a new numerical algorithm for multivariable matrix spectral factorization is constructed.

		\vskip+0.6cm
		
	{\bf Key words:}  Matrix spectral factorization, multivariable systems, unitary matrix functions.
	
		\vskip+0.6cm
		
	{\bf 	AMS subject classifications.} 47A68, 65E99

		\section{introduction}

	Spectral factorization was initiated in the works of Wiener \cite{Wiener49} and Kolmogorov \cite{Kolm41} as the scalar spectral factorization problem in relation to linear prediction theory of stationary stochastic processes, it has been extended to the matrix case by Wiener and Masani \cite{Wie57}. Their matrix spectral factorization (MSF) theorem asserts that if $S$ is a positive definite integrable $d\times d$ matrix function defined on the unit circle $\bT$ in the complex plane, $S\in L^1(\bT)^{d\times d}$, which satisfies the
		Paley-Wiener condition
		\begin{equation}\label{PW}
		\log \det S\in L^1({\mathbb T}),
		\end{equation}  then it admits the factorization
		\begin{equation}\label{MSF}
		S(t)=S_+(t)S_+^*(t).
		\end{equation}
Here  $S_+\in \bH^2(\bT)^{d\times d}$, i.e., $S_+$ can be analytically extended  inside $\bT$ to a square integrable matrix function (for exact definitions see Sect. 2) and $A^*$ stands for the Hermitian conjugate of $A$. The spectral factor $S_+$ can be selected outer and it is the unique up to a constant right unitary factor.
		
		Representation \eqref{MSF} plays a crucial role in the study of systems of singular integral equations \cite{GohKrein},  in linear estimation \cite{Kai99}, quadratic and $H^\infty$ control \cite{AndMoo}, \cite{Francis},  communications \cite{fisher},  wavelets and filter design \cite{Dau}, \cite{StrNgu}, Granger causality estimation in neuroscience \cite{Dhamala}, etc. In many of these applications, it is important to actually compute $S_+$ approximately for a given matrix function $S$ which becomes a challenging problem. Therefore, starting with Wiener's original efforts \cite{Wie58} to create a sound computational method of MSF, dozens of different algorithms have appeared in the literature (see the survey papers  \cite{Kuc}, \cite{SayKai} and  references therein, and also \cite{Bott13}, \cite{Jaf} for more recent results). 
		
A novel approach to the approximate factorization problem  \eqref{MSF}, without imposing any  restriction on $S$ beyond the necessary and sufficient condition \eqref{PW} for the existence of spectral factorization, was originally developed by Janashia and Lagvilava in \cite{JL99} for $2\times 2$ matrices. This approach was subsequently  extended to matrices of arbitrary dimension in \cite{IEEE}, efficiently  algorithmized  in \cite{IEEE-2018}, and successfully applied, e.g., in \cite{CNR}.

		Helson  and Lowdenslager \cite{HelLow58} further generalized Wiener-Masani MSF theorem to the multivariable case. To this end, let $N>1$ be a positive integer and let $H_N\sbs\bZ^N$  be the {\em half-plane of lattice points} defined recursively:
		$H_1=\bZ_+=\bN\cup\{0\} $ and 	$H_N=\{(k_1,k_2,\ldots,k_N)\in\bZ^N: k_1>0 \text{ or }
		k_1=0 \text{ and } (k_2,\ldots,k_N)\in H_{N-1}\}$.		
		We say that $f\in L^1(\bT^N)$ is of {\em analytic type} (with respect to the half-plane $H_N$) if $C_\mbk\{f\}=0$ for each $\mbk\in \bZ^N\setminus H_N$, where 
		 $C_{\mbk}\{f\}$ are the Fourier coefficients of $f$.		
		 The set of such functions will be denoted by $\calA(\bT^N)$. Finally, let  $\bH^2(\bT^N)=\calA(\bT^N)\cap L^2(\bT^N)$.
		 
		 The Helson-Lowdenslager MSF theorem \cite{HelLow58} asserts that if   \begin{equation}\label{S}
		 0<S\in L^1(\bT^N)^{d\times d}		 
		 \end{equation} 
		  and satisfies the condition
		 \begin{equation}\label{HL-PW}
		 \log \det S\in L^1({\mathbb T^N}),
		 \end{equation} 
		  then there exists a unique (up to a constant unitary matrix)  factorization
		 \begin{equation}\label{HL-MSF}
		 S(\mbt)=S_+(\mbt)S_+^*(\mbt),\;\;\;\mbt\in\bT^N,
		 \end{equation}
		 where  $S_+\in \bH^2(\bT^N)^{d\times d}$ is a matrix function of outer analytic type (see Sect. 2 for definitions).
		 
		 Wiener-Masani MSF theorem is used to process vector data depending on a single parameter, e.g., stationary time series collected by simultaneous observations at several different locations. However, due to the complex nature of the phenomena, data might be dependent on several parameters, e.g., color images on 2-D screen, or 3-D tomographic medical images. In such situations, the  Helson-Lowdenslager MSF theorem enters the  scene. Therefore  a lot of effort was put in the development of computational methods for $N$-D MSF  \cite{2-D-Wil}, \cite{Kummert}, \cite{N-D-1}, \cite{Basu}  \cite{Lai-2006}, \cite{Trentelman}, \cite{Annals-2004}.	 
		Clearly, improved methods of such factorization will further increase the  applicability of the Helson-Lowdenslager theorem.
		 
		 In this paper we extend the Janashia-Lagvilava method of MSF to the multivariable case, and hence introduce a novel computational algorithm for  the Helson-Lowdenslager matrix spectral factorization.  
		 The paper is organized as follows. In Section 2, we introduce necessary notation and preliminary observations. In Section 3, we consider the uniqueness of $N$-D MSF. Section 4 deals with multivariable scalar spectral factorization.  In Sections 5 and 6, we give an  essential component of the proposed multivariable MSF algorithm and present its general description.  We prove the convergence properties of the method in  Section 7 and  provide some results of numerical simulations in  Section 8. Finally, in the Appendix, we demonstrate the application of spectral factorization in Granger causality.

		 \section{Notation and preliminary observations}

		 Throughout the paper, a positive integer $N\geq 1$ denotes the dimension of the torus $\bT^N$. The latter  is equipped with the normalized Lebesgue measure $\mu_N= d\mbt/(2\pi)^N$. The half-plane of lattice points $H_N$ is defined in  the Introduction.  Note that $H_N$ has the following properties: i) $\mbo\in H_N$; ii) $\mbk\in H_N$ if and only if $-\mbk\not\in H_N$ unless $\mbk=\mbo$;  iii) $\mbk_1, \mbk_2\in H_N$ imply $\mbk_1+\mbk_2\in H_N$.

The complex conjugate of $a\in\bC$ is denoted by $\ol{a}$  and  $A^*$ stands for the Hermitian conjugate of $A\in \bC^{d\times d}$.
 For any set $\bS$, the notation $\bS^{d\times d}$ is used for the set of  ${d\times d}$ matrices with entries from $\bS$. For $M\in \bS^{d\times d}$ and $m\leq d$, $[M]_{m\tm m}$ denotes the $m\tm m$ leading principle submatrix of $M$.
A matrix function $S$ is called {\em factorable} if \eqref{S} and \eqref{HL-PW} hold. The notation $S>0$ means that it is positive definite a.e.

		 Let $L^p(\bT^N)$, $p>0$, be the standard Lebesgue space of $p$-integrable functions with usual definition of the norm $\|f\|_{L^p(\bT^N)}$ for $p\geq 1$.

		 The Fourier coefficients of $f\in L^1(\bT^N)$ are defined by the formula
		$$
		 C_{\mbk}\{f\}=\int_{\bT^N}f(\mbt)\, \mbt^{-\mbk}\,d\mu_N,
		$$
		 where $ \mbt^\mbk=t_1^{k_1}t_2^{k_2}\ldots t_N^{k_N}$ for 
		 $\mbt=(t_1,t_2,\ldots,t_N)\in\bT^N$, 
		 $\mbk=(k_1,k_2,\ldots,k_N)$ $\in\bZ^N$,		 
		  and $f\in \calA(\bT^N)$ means that $C_\mbk\{f\}=0$ for each multi-index $\mbk$ outside $H_N$ (as in the Introduction),
$$
\calA(\bT^N):=\{f\in L^1(\bT^N): C_{\mbk}\{f\}=0 \text{ for each } \mbk\notin H_N\}.
$$

On several occasions,  we need to expand a function $f\in L^2(\bT^N)$ into ``Fourier” series with respect to the first variable
\begin{equation}\label{20.8.1}
f(t_1,t_2,\ldots,t_N)=\sum\nolimits_{k\in\bZ} t_1^k C_{1k}\{f\}(t_2,\ldots,t_N)\;\text{ where } C_{1k}\{f\}\in L^2(\bT^{N-1}).
\end{equation}
For each $k$, the function $C_{1k}\{f\}$ is defined a.e. on $\bT^{N-1}$ by
$$
C_{1k}\{f\}(t_2,\ldots,t_N)=\frac{1}{2\pi}\int_\bT f(t_1,t_2,\ldots,t_N)t_1^{-k}\,dt_1,
$$
and equation \eqref{20.8.1} holds for a.e. $(t_1,t_2,\ldots,t_N)\in\bT^N$.

 If a function $f\in L^2(\bT^N)$ has the form
$$
f(t_1,t_2,\ldots,t_N)=\sum\nolimits_{k=0}^n t_1^k \al_{k}(t_2,\ldots,t_N)\;\text{ where } \al_k\in L^2(\bT^{N-1}),
$$
then we say that
\begin{equation}\label{06.04.01}
f\in\calP_+^n(\bT^N_1).
\end{equation}

For a function $f$ defined by \eqref{20.8.1}, we let
\begin{equation}\label{20.8.2}
\wdt{f}(t_1,t_2,\ldots,t_N)=\sum\nolimits_{k\in\bZ} t_1^{-k} \ol{C_{1k}\{f\}(t_2,\ldots,t_N)}.
\end{equation}
Clearly
$$
\wdt{f}(t_1,t_2,\ldots,t_N)=  \ol{f(t_1,t_2,\ldots,t_N)}\;\text{ a.e. on } \bT^N.
$$

 For a measurable function $f:\bT^N\to\bC$ we define $	f_{t_2,t_3,\ldots,t_N}:\bT\to\bC$ for a.a. $(t_2,t_3,\ldots,t_N)\in \bT^{N-1}$ by 
 $$
 f_{t_2,t_3,\ldots,t_N}(t)=f(t,t_2,t_3,\ldots,t_N).
 $$
 Obviously, because of Fubini’s theorem,
 \begin{equation}\label{2.51}
 f\in L^p(\bT^N)
\Longrightarrow
 f_{t_2,t_3,\ldots,t_N}\in L^p(\bT)\;\text{ for a.e. }
 (t_2,t_3,\ldots,t_N)\in \bT^{N-1}.
 \end{equation}
 For $ f\in L^1(T^N)$, let  $\hat{f}\in L^1(\bT^{N-1})$ be defined by
 \begin{equation}\label{2.81}
 \hat{f}(t_2,t_3,\ldots,t_N)=  \int_{\bT}  f_{t_2,t_3,\ldots,t_N}(t) \,d\mu_1
=  \int_{\bT}f(t,t_2,t_3,\ldots,t_N)\,d\mu_1.
 \end{equation}	
 
  Let 
 \begin{equation}\label{defHpN}
 \bH^p(\bT^N):=\calA(\bT^N)\cap L^p(\bT^N),\;\text{ where }\;p\geq 1, 
 \end{equation}
 be the class of analytic type functions (defined in  the Introduction for $p=2$).  
 The following recursive characterization of $\bH^p(\bT^N)$ will be useful in the sequel.
		 
		 	\begin{proposition}\label{H1}
Let $f\in L^p(\bT^N)$, where   $p\geq 1$ and $N\geq 2$. Then 
		 	\begin{equation}\label{HpTN}
		 	f\in\bH^p(\bT^N)
		 	\end{equation}
	if and only if
		 	\begin{equation}\label{Hp}
		 	f_{t_2,t_3,\ldots,t_N}\in \bH^p(\bT)
		 	\end{equation}
	for a.e. $(t_2,t_3,\ldots,t_N)\in\bT^{N-1}$ and 
		 	 \begin{equation}\label{HpT}
		 	\hat{f}\in \bH^p(\bT^{N-1}).
		 	 \end{equation}
 \end{proposition}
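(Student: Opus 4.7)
The plan is to unwind both the definition of $\bH^p(\bT^N)$ and of the half-plane $H_N$, and then verify that the two conditions (1.6) and (1.7) together are equivalent to the vanishing of all Fourier coefficients $C_\mbk\{f\}$ for $\mbk\notin H_N$. The key observation is the following splitting: by the recursive definition of $H_N$, a multi-index $\mbk=(k_1,k_2,\ldots,k_N)$ lies outside $H_N$ if and only if either (a) $k_1<0$, or (b) $k_1=0$ and $(k_2,\ldots,k_N)\notin H_{N-1}$. (The case $k_1>0$ never yields $\mbk\notin H_N$.) The proposition should split accordingly into these two cases.

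First I would apply Fubini's theorem to rewrite
\[
C_\mbk\{f\}=\int_{\bT^{N-1}}\Big(\int_\bT f(t_1,t_2,\ldots,t_N)\,t_1^{-k_1}\,d\mu_1\Big)\,t_2^{-k_2}\cdots t_N^{-k_N}\,d\mu_{N-1}
= C_{(k_2,\ldots,k_N)}\{C_{1k_1}\{f\}\},
\]
which is legitimate by (2.5) and the definition (2.7) of $C_{1k_1}\{f\}$. Next I would record the two identifications that power the whole argument: for a.e.\ $(t_2,\ldots,t_N)$ and every $k\in\bZ$,
\[
C_k\{f_{t_2,\ldots,t_N}\}=C_{1k}\{f\}(t_2,\ldots,t_N),
\]
and (taking $k_1=0$ in (2.7)) $\hat f = C_{10}\{f\}$ as elements of $L^p(\bT^{N-1})$. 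The fact that $\hat f\in L^p(\bT^{N-1})$ when $f\in L^p(\bT^N)$ follows from Minkowski's integral inequality, so condition (1.7) is at least meaningful, and similarly (2.5) makes (1.6) meaningful.

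With these identifications in hand the equivalence becomes combinatorial. Case (a) ($k_1<0$): the condition $C_\mbk\{f\}=0$ for every $(k_2,\ldots,k_N)\in\bZ^{N-1}$ says exactly that all Fourier coefficients of $C_{1k_1}\{f\}\in L^1(\bT^{N-1})$ vanish, i.e.\ $C_{1k_1}\{f\}=0$ a.e. By the first identification above, this is equivalent to $C_k\{f_{t_2,\ldots,t_N}\}=0$ for a.e.\ $(t_2,\ldots,t_N)$; gathering these conclusions over the countable set $\{k<0\}$ gives exactly condition (1.6). Case (b) ($k_1=0$): the condition $C_\mbk\{f\}=0$ for all $(k_2,\ldots,k_N)\notin H_{N-1}$ says that $C_{10}\{f\}=\hat f\in\calA(\bT^{N-1})$, which combined with $\hat f\in L^p(\bT^{N-1})$ is exactly (1.7).

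I do not anticipate a serious obstacle; the argument is a careful bookkeeping of Fubini plus the recursive structure of $H_N$. The one spot that deserves a line of justification is the a.e.\ handling in case (a): a priori the null set where $f_{t_2,\ldots,t_N}\notin\bH^p(\bT)$ could depend on $k$, but since $k$ ranges over a countable set one takes a single common null set, and passes between ``$C_{1k}\{f\}=0$ a.e.\ for each $k<0$'' and ``for a.e.\ $(t_2,\ldots,t_N)$, $C_k\{f_{t_2,\ldots,t_N}\}=0$ for all $k<0$'' without issue. Everything else is immediate from the definition (2.13) of $\bH^p(\bT^N)$.
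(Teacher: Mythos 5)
Your proof is correct and follows essentially the same route as the paper's: both decompose $\bZ^N\setminus H_N$ into the cases $k_1<0$ and $k_1=0$ with $(k_2,\ldots,k_N)\notin H_{N-1}$, use Fubini to identify $C_\mbk\{f\}$ with a Fourier coefficient of the slice function $C_{1k_1}\{f\}$ (the paper's $h_k$), and invoke uniqueness of Fourier coefficients to pass to the a.e.\ statement. Your explicit treatment of the countable union of null sets and of the reverse implication only spells out what the paper leaves as ``reversing the above obtained implications.''
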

	 
		 \begin{proof} 
We have  $f_{t_2,t_3,\ldots,t_N}\in L^p(\bT)$ for a.e. 
$(t_2,t_3,\ldots,t_N)\in \bT^{N-1}$	due to \eqref{2.51}, and
it follows from definition \eqref{2.81}, Jensen's inequality $\big(\int_{\bT}|g|\,d\mu_1\big)^p\leq\int_{\bT}|g|^p\,d\mu_1$, and 
Fubini’s theorem that
 $\hat{f}\in L^p(\bT^{N-1})$.
		 	
	Suppose \eqref{HpTN} holds. For each integer $k<0$, the function $h_k\in L^1(\bT^{N-1})$ defined by 
		 	\begin{equation}\label{h}
		 	h_k(t_2,t_3,\ldots,t_N)=\int_{\bT}f(t,t_2,t_3,\ldots,t_N)\,t^{-k}\,d\mu_1
		 	\end{equation}
		 	has all Fourier coefficients equal to $0$, because
		 	$$
		 	C_{\mbk}\{h_k\}=\int_{\bT^N}f(t,\mbt)\,t^{-k}\mbt^{-\mbk}\,d\mu_1d\mu_{N-1}=0
		 	$$
for any $\mbk=(k_2,k_3,\ldots,k_N)$, where	$\mbt=(t_2,t_3,\ldots,t_N)$, since $f\in\calA(\bT^N)$. Hence, for a.e. $(t_2,t_3,\ldots,t_N)\in\bT^{N-1}$, the integral in \eqref{h} is equal to $0$ for all $k<0$, and therefore \eqref{Hp} holds. For $k=0$ and $\mbk\notin H_{N-1}$, we still have

		 	$$
		 	0=\int_{\bT^N}f(t,\mbt)\,t^{-k}\mbt^{-\mbk}\,d\mu_1d\mu_{N-1}=
    \int_{\bT^{N-1}}\hat{f}(\mbt)\mbt^{-\mbk}\,d\mu_{N-1},
		 	$$
		 	and therefore $\hat{f}\in\calA(\bT^{N-1})$ and \eqref{HpT} holds.
		 	
Suppose now that \eqref{Hp} and \eqref{HpT} hold. Then $f\in\calA(\bT^N)$ can be proved by direct application of Fubini's theorem reversing the above obtained implications. Therefore \eqref{HpTN} holds. 
\end{proof} 

Next, for convenience of presentation of the obtained results, we introduce the Hardy spaces
	\begin{equation}\label{2.17}
\bH^p(\bT^N)\; \text{ for }\; p>0.
\end{equation}
For  $N=1$, the Hardy space $\bH^p=\bH^p(\bT)$ is defined for all $p>0$ by
		 	$$
		 	\bH^p:=\left\{f\in\mathcal{A}(\mathbb{D}):\sup\limits_{\rho<1}
		 	\int\nolimits_0^{2\pi}|f(\rho e^{i\theta})|^p\,d\theta<\infty\right\}.
		 	$$
The functions from $\bH^p$, where $p>0$, and their boundary values can be identified (see, e.g. \cite{Koo}). Therefore, we can assume that $\bH^p=\bH^p(\bT)\sbs L^p(\bT)$ for $0<p\leq\infty$, and this definition  agrees with \eqref{defHpN} for $p\geq 1$ and $N=1$. (However, we can speak about the values of a function $f\in \bH^p(\bT)$ inside the unit disk if necessary). Nevertheless, the definition \eqref{defHpN} cannot be extended to arbitrary $p>0$ because the question whether $f\in \calA(\bT^N)$ arises only when $f$ is integrable. However, the equivalent characterization of $\bH^p(\bT^N)$ according to Propositiom 2.1 enables us to extend this definitions to \eqref{2.17}.

\begin{definition}\label{def1}
Assume $\bH^p(\bT^1)=\bH^p(\bT)=\bH^p$. 
We say that $f\in\bH^p(\bT^N)$, where $p>0$ and $N\geq 2$, if and only  if $	f_{t_2,t_3,\ldots,t_N}\in \bH^p$ for a.a. $(t_2,t_3,\ldots,t_N)\in\bT^{N-1}$   and $\hat{f}_1\in \bH^p(\bT^{N-1})$, where $\hat{f}_1$ is defined by the equality 
	\begin{equation}\label{def2.12}
	 		\hat{f}_1(t_2,t_3,\ldots,t_N):=f_{t_2,t_3,\ldots,t_N}(0)=f_{t_2,t_3,\ldots,t_N}(z)|_{z=0}\,.
	\end{equation}
\end{definition}

 Note that this definition of $\bH^p(\bT^N)$   differs from the standard Hardy space defined in the theory of several complex variables for $N>1$ (see \cite{Rudin1980}, p. 84).

 \begin{remark}\label{remDef2.1}
 	It follows from Definition \ref{def1} that if $f\in \bH^p(\bT^N)$, then 
 	$f(z,t_{2},\ldots,t_N)$ is defined for a.a. 
 	$(t_2,\ldots,t_N)\in\bT^{N-1}$ and
 	each $z\in\bD$.  	
Furthermore, for  $1\leq k<N$,  	
 $f(0,\ldots,0,z,t_{k+1},\ldots,t_N)$
 is defined for a.a. 
 $(t_k,t_{k+1},\ldots,t_N)\in\bT^{N-k}$ and
 each $z\in\bD$.	
 	 Therefore, similarly to \eqref{def2.12}, one can define the function $\hat{f}_k:\bT^{N-k}\to\bC$ by
 	\begin{equation}\label{def2.13}
 	\hat{f}_k(t_{k+1}\ldots,t_N):=f(0,\ldots,0,0,t_{k+1},\ldots,t_N)
 	\end{equation}	

	If $p\geq 1$, then $\hat{f}_1=\hat{f}$ a.e. on $\bT^{N-1}$, where $\hat{f}$ is defined by \eqref{2.81}, and
\begin{equation}\label{def2.14}
\hat{f}_k(t_{k+1}\ldots,t_N)=\int_{\bT^k}f(\cdot,t_{k+1},\ldots,t_N)\,d\mu_k.
\end{equation}
In particular, 
	\begin{equation}\label{foLnk}
f(\mbo):=f(0,0,\cdots,0,0)=\int_{\bT^N} f\,d\mu_N=C_\mbo\{f\}
\end{equation}
$($the definition in \eqref{foLnk} makes sense, and thus will be used, for all $p>0${}$)$.	
	\end{remark}
	 	
The prominent property of Hardy space functions
$$
\int_{\bT} \log|h(t)|\,d\mu_1>-\iy,
$$ 		
for any  $0\not\equiv h\in\bH^p$ (see \cite[Th. 17.17]{Rud}), is no longer valid for arbitrary function $0\not\equiv f\in\bH^p(\bT^N)$ for $N>1$, because it may happen that 
	\begin{equation}\label{eq4}
\int_{\bT^N} \log |f(\mbt)|\,d\mu_N=-\iy
\end{equation}
(see a counterexample at \cite[p. 176]{HelLow58}). However, it is possible to single out the situations where  \eqref{eq4} may occur.

\begin{lemma}\label{Lmnk}
	Let $f\in\bH^P(\bT^N)$, for $p>0$ and $N\geq 2$, and suppose \eqref{eq4} holds. Then 
	\begin{equation}\label{eqv0}
\hat{f}_{N-1}\equiv 0.
\end{equation}	
\end{lemma}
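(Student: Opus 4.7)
\medskip

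\noindent\textbf{Proof plan.} The plan is to induct on $N\geq 2$, the workhorse being the classical Jensen inequality for functions in $\bH^p(\bT)$, namely
$$
\log|h(0)|\;\leq\;\int_{\bT}\log|h(t)|\,d\mu_1\qquad(h\in\bH^p,\ p>0),
$$
with both sides equal to $-\infty$ when $h\equiv 0$, and strict finiteness of the right-hand side whenever $h\not\equiv 0$ (the property recalled immediately before the lemma). The reduction at each stage will be: apply this slicewise in the first variable to $f$, integrate in the remaining variables, and identify the resulting value at the origin with an $\hat{f}_k$.

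Let $f\in\bH^p(\bT^N)$ satisfy \eqref{eq4}. By Definition \ref{def1}, $f_{t_2,\ldots,t_N}\in\bH^p$ for a.a.\ $(t_2,\ldots,t_N)\in\bT^{N-1}$, and $\hat f_1\in\bH^p(\bT^{N-1})$. Jensen applied to $f_{t_2,\ldots,t_N}$ gives, a.e.\ in $(t_2,\ldots,t_N)$,
$$
\log|\hat f_1(t_2,\ldots,t_N)|
=\log|f_{t_2,\ldots,t_N}(0)|
\;\leq\;\int_{\bT}\log|f(t_1,t_2,\ldots,t_N)|\,d\mu_1(t_1).
$$
Integrating in $(t_2,\ldots,t_N)$, after splitting $\log=\log^+-\log^-$ and appealing to Tonelli--Fubini, yields
$$
\int_{\bT^{N-1}}\log|\hat f_1|\,d\mu_{N-1}
\;\leq\;\int_{\bT^{N}}\log|f|\,d\mu_N
\;=\;-\infty.
$$
For the base case $N=2$, this forces $\hat f_1\equiv 0$ by the one-variable property, and $\hat f_1=\hat f_{N-1}$. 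For $N\geq 3$, the displayed inequality is exactly the hypothesis \eqref{eq4} applied to $\hat f_1\in\bH^p(\bT^{N-1})$; the inductive hypothesis therefore gives $\widehat{(\hat f_1)}_{N-2}\equiv 0$, and unwinding \eqref{def2.12}--\eqref{def2.13} (or Remark \ref{remDef2.1}) we have
$$
\widehat{(\hat f_1)}_{N-2}(t_N)=\hat f_1(0,\ldots,0,t_N)=f(0,\ldots,0,t_N)=\hat f_{N-1}(t_N),
$$
so $\hat f_{N-1}\equiv 0$.

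The main technical obstacle is ensuring Fubini applies to the signed integrand $\log|f|$. For this one needs $\log^+|f|\in L^1(\bT^N)$, so that \eqref{eq4} is unambiguously meaningful and Tonelli on $\log^-$ together with ordinary Fubini on $\log^+$ permit the swap of integrations used above; the same must then be inherited by $\hat f_1$ on $\bT^{N-1}$ before the inductive hypothesis is invoked. This is handled by the elementary bound $\log^+ x\leq x^p/p$ on $[1,\infty)$ combined with the $L^p$-type control available via Definition \ref{def1}, so that the outer passage to the inductive step is harmless. Beyond this bookkeeping, the proof is the straightforward induction sketched above.
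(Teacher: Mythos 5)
Your proof is correct and follows essentially the same route as the paper's: the slicewise Jensen inequality $\log|h(0)|\le\int_{\bT}\log|h|\,d\mu_1$ applied in the first variable, integrated over the remaining variables to get $\int_{\bT^{N-1}}\log|\hat f_1|\,d\mu_{N-1}=-\infty$, and then iterated down to $\hat f_{N-1}\in\bH^p(\bT)$, which must vanish identically. The only differences are cosmetic — you phrase the paper's recursion as a formal induction on $N$ and add the Tonelli/Fubini bookkeeping for the signed integrand $\log|f|$ that the paper leaves implicit.
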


\begin{proof}
We use the well-known estimation 
\begin{equation}\label{logh}
\log|h(0)|\leq\int_{\bT} \log |h(t)|\,d\mu_1
\end{equation}
for any $h\in\bH^p$ (see \cite[Th. 17.17]{Rud}), which together with \eqref{def2.12} implies that 
\begin{equation}\label{gath16}
\int_{\bT^N}\log|f(\mbt)|\,d\mu_N
\geq \int_{\bT^{N-1}} \log|\hat{f}_1(t_2,t_3,\ldots,t_N)|\,d\mu_{N-1}.
\end{equation}
Hence, it follows from \eqref{eq4} that the second integral in \eqref{gath16} is also $-\infty$.

We can carry out the same reasoning for the function $\hat{f}_1$ instead of $f$, and continuing recursively  in the same manner, we will obtain 
$$ \int_{\bT^{N-k}} \log|\hat{f}_k|\,d\mu_{N-k}=-\iy
\;\text{   for }\; k=1,2,\ldots,N-1.
$$
Hence, we get $\hat{f}_{N-1}\in \bH^p(\bT)$ and $\int_{\bT} \log|\hat{f}_{N-1}|\,d\mu_1=-\iy$, which implies \eqref{eqv0}.
\end{proof}

Next we define the set of {\em outer} type functions from $\bH^p(\bT^N)$, $p>0$, which is denoted by $\bH^p_O(\bT^N)$. For $N=n=1$, the definition is classical:  $0\not\equiv f\in\bH^p$, where $p>0$, is called outer, $f\in\bH^p_O(\bT)=:\bH^p_O$, if  
	\begin{equation}\label{outer}
	 		f(z)=c\cdot  \exp\left(\frac 1{2\pi}
	 		\int\nolimits_0^{2\pi}\frac{e^{i\theta}+z}{e^{i\theta}-z}\log
	 		\big|f(e^{i\theta})\big|\,d\theta\right),\;\;\;\;\;|c|=1,
	\end{equation}
which is equivalent to	(see \cite[Th. 17.17]{Rud})	
		\begin{equation}\label{outer2}
\log|f(0)|=\frac 1{2\pi} 	\int\nolimits_0^{2\pi}\log
	\big|f(e^{i\theta})\big|\,d\theta.
	\end{equation}
For $N>1$, the definition will be given recursively.
\begin{definition}\label{def2}
	 We say that $f\in\bH^p_O(\bT^N)$, $p>0$, if and only  if 
	\begin{equation}\label{outae}
	f_{t_2,t_3,\ldots,t_N}\in \bH^p_O\; \text{ for a.e. } (t_2,t_3,\ldots,t_N)\in\bT^{N-1} 
	\end{equation} 
 and $\hat{f}_1\in \bH^p_O(\bT^{N-1})$, where $\hat{f}_1$ is defined by \eqref{def2.12}.
\end{definition}

Note that the  definition of $\bH^p_O(\bT^N)$ coincides with the corresponding concept introduced in \cite[p. 181]{HelLow58} for $p\geq 1$. Namely, the following lemma holds.
\begin{lemma}\label{Louter}
	Let $f\in\bH^p(\bT^N)$, $p>0$. Then 
	\begin{equation}\label{2.195}
f\in\bH^p_O(\bT^N)
 \end{equation}
 if and only if 
 	\begin{equation}\label{logeq}
\int_{\bT^N}\log|f(\mbt)|\,d\mu_N=\log\left|f(\mbo)\right|>-\iy
 \end{equation}
 where $f(\mbo)$ is defined by \eqref{foLnk}.
\end{lemma}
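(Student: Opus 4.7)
The plan is to proceed by induction on $N$, exploiting both the recursive structure of Definition \ref{def2} and the recursive step that already appears in the proof of Lemma \ref{Lmnk}.

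For the base case $N=1$, the statement reduces to the classical characterization \eqref{outer2}: a non‑trivial $f\in\bH^p$ is outer if and only if $\log|f(0)|=\int_\bT\log|f|\,d\mu_1$, while the finiteness condition $\log|f(\mbo)|>-\iy$ is equivalent to $f\not\equiv 0$. For the inductive step, the two ingredients I would assemble first are: (i) the scalar Jensen inequality \eqref{logh}, applied slicewise to $f_{t_2,\ldots,t_N}\in\bH^p$, which after integrating in $(t_2,\ldots,t_N)$ and using Fubini gives \eqref{gath16}; and (ii) the inductive hypothesis applied to $\hat{f}_1\in\bH^p(\bT^{N-1})$, which, combined with a second application of scalar Jensen (iterated down to $\bT^1$), yields the multivariable Jensen inequality $\log|\hat{f}_1(\mbo)|\leq\int_{\bT^{N-1}}\log|\hat{f}_1|\,d\mu_{N-1}$. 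Note that $\hat{f}_1(\mbo)=f(\mbo)$ by \eqref{def2.14} and Fubini applied to \eqref{foLnk}.

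For the direction $(\Rightarrow)$, if $f\in\bH^p_O(\bT^N)$ then for a.e.\ $(t_2,\ldots,t_N)$ the slice $f_{t_2,\ldots,t_N}$ is outer in $\bH^p$, so \eqref{outer2} gives $\int_\bT\log|f_{t_2,\ldots,t_N}|\,d\mu_1=\log|\hat{f}_1(t_2,\ldots,t_N)|$ a.e.; integrating and invoking the inductive hypothesis on $\hat f_1\in\bH^p_O(\bT^{N-1})$ yields
\[
\int_{\bT^N}\log|f|\,d\mu_N=\int_{\bT^{N-1}}\log|\hat{f}_1|\,d\mu_{N-1}=\log|\hat{f}_1(\mbo)|=\log|f(\mbo)|>-\iy.
\]
For the converse $(\Leftarrow)$, assume \eqref{logeq}. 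Chaining the two Jensen‑type inequalities yields
\[
\log|f(\mbo)|=\log|\hat{f}_1(\mbo)|\leq\int_{\bT^{N-1}}\log|\hat{f}_1|\,d\mu_{N-1}\leq\int_{\bT^N}\log|f|\,d\mu_N=\log|f(\mbo)|,
\]
so both inequalities are equalities. The first equality, via the inductive hypothesis, forces $\hat{f}_1\in\bH^p_O(\bT^{N-1})$. The second, rewritten via Fubini, says that the a.e.\ nonnegative integrand $\int_\bT\log|f_{t_2,\ldots,t_N}|\,d\mu_1-\log|\hat{f}_1(t_2,\ldots,t_N)|$ has zero integral, hence vanishes a.e.; by the base‑case criterion \eqref{outer2}, this means $f_{t_2,\ldots,t_N}\in\bH^p_O$ for a.e.\ $(t_2,\ldots,t_N)$. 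Both clauses of Definition \ref{def2} then hold, giving $f\in\bH^p_O(\bT^N)$.

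The main point to handle carefully is the measurability and integrability of the intermediate slice integrals when some of them could a priori be $-\iy$. The assumption $\log|f(\mbo)|>-\iy$ rules this out: in the $(\Leftarrow)$ direction it forces $f(\mbo)\neq 0$, hence $\hat{f}_k\not\equiv 0$ for each $k$, and Fubini guarantees that $\int_\bT\log|f_{t_2,\ldots,t_N}|\,d\mu_1$ is finite for a.e.\ $(t_2,\ldots,t_N)$, so all the intermediate quantities in the chain of inequalities make sense and the a.e.\ argument for equality is legitimate.
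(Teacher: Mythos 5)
Your proposal is correct and follows essentially the same route as the paper: the paper also reduces both conditions to the chain of equalities obtained by successive application of the slicewise Jensen inequality \eqref{gath16}, with equality at each level corresponding (via \eqref{outer2}) to outerness of the slices and of $\hat f_1$. Your inductive write-up just makes explicit the details the paper leaves implicit, in particular the ``nonnegative integrand with zero integral vanishes a.e.'' step and the finiteness of the intermediate integrals, and it replaces the paper's appeal to Lemma \ref{Lmnk} in the forward direction by a direct use of the inductive hypothesis.
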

\begin{proof}
As mentioned above, the equivalence of these two conditions is a well-known fact for $N=1$, therefore, we need to consider the case $N\geq 2$.

 Note first that $f\in\bH^p_O(\bT^N)$ $\Longrightarrow$ $\int_{\bT^N}\log|f(t)|\,d\mu_N\not=-\iy$ since otherwise $\hat{f}_{N-1}\equiv 0$ by  Lemma \ref{Lmnk}, which contradicts the definition of $\bH^p_O(\bT^N)$ space.
 
 On the other hand, both \eqref{2.195} and \eqref {logeq} are equivalent to the sequence of equations
 \begin{equation}\label{2.234}
  \int_{\bT^N}\log|f|\,d\mu_N=
 \int_{\bT^{N-1}}\log|\hat{f}_1|\,d\mu_{N-1}=
  \ldots=\int_{\bT}\log|\hat{f}_{N-1}|\,d\mu_1=
 \log\left|f(\mbo)\right|
 \end{equation}
(because of successive application of \eqref {gath16}, where we have to have ``$=$” instead of ``$\geq$”; see also \eqref{foLnk}).   Therefore, \eqref{2.195} and \eqref {logeq} are equivalent.
\end{proof}
	
	\begin{definition}\label{def3}
		We say that a matrix function $F\in H^p(\bT^N)^{d\times d}$ is of outer type and use the notation $F\in H^p(\bT^N)_O^{d\times d}$ if 
		$\det F\in \bH^{r}_O(\bT^N)$ for some $r>0$.
	\end{definition}	 	

Next, we introduce the following imbedded spaces, $\bH^p(\bT^N)\sbs \bH^p(\bT^N_{N-1})\sbs
\ldots\sbs\bH^p(\bT^N_1)\sbs  L^p(\bT^N)$, which is used later.
 \begin{definition}\label{def2.4}
 	For $N\geq 2$ and $1\leq l<N$, we say that $f\in \bH^p(\bT^N_l)$ $($resp. $f\in \bH^p_O(\bT^N_l)${}$)$ if and only if $f\in L^p(\bT^N)$ and, for a.a. $(t_{l+1},t_{l+2},\ldots,t_N)\in\bT^{N-l}$,
 $$
 f(\cdot,t_{l+1},\ldots,t_N)\in\bH^p(\bT^l)\;\;(\text{resp. }  f(\cdot,t_{l+1},\ldots,t_N)\in\bH^p_O(\bT^l) )
 $$
 as a function of variables $(t_1,t_2,\ldots,t_l)$.
 
 The classes of matrix functions $ \bH^p(\bT^N_l)^{d\times d}$ and $ \bH^p(\bT^N_l)^{d\times d}_O$ are defined similarly.

 \end{definition}

\begin{remark}\label{rem2.3}
	Note that if $f\in H^p(\bT^N_l)$ and $k\leq l$, then Remark \ref{remDef2.1} remains valid and $\hat{f}_k$ can be defined by \eqref{def2.13}.  Furthermore, if
	$f,g\in H^p_O(\bT^N_l)$ and $|f|=|g|$ a.e. on $\bT^N$, then 
	$$
	|\hat{f}_{k-1}(z,t_{k+1},t_{k+2},\ldots,t_N)|= |\hat{g}_{k-1}(z,t_{k+1},t_{k+2},\ldots,t_N)| 
	$$
 for a.a.  $(t_{k+1},t_{k+2},\ldots,t_N)\in\bT^{N-k}$  and each $z\in\bD$ $($it is assumed that $\hat{f}_0=f${}$)$, consequently,
 $$
 |\hat{f}_k|=|\hat{g}_k|\; \text{ a.e. on }\bT^{N-k}
 $$
 for each $k=1,2,\ldots,l$.
 \end{remark}

\begin{remark}\label{rem2.4}
	It follows from Definitions \ref{def1},  \ref{def2}, and \ref{def2.4} that if $f\in \bH^p_O(\bT^N_l)$, where $l<N$, and
$$
g(t_1,t_2\ldots,t_N)=f(t_1,t_2\ldots,t_N)h(t_{l+1},t_{l+2}\ldots,t_N)
$$
for some $h\in L^\iy(\bT^{N-l})$, then $g\in \bH^p_O(\bT^N_l)$ as well. This fact is often tacitly used in what follows.
\end{remark}

 A matrix function $U\in L^\iy(\bT^N)^{d\times d}$ is called unitary if $U(\mbt)U^*(\mbt)=I_d$ for a.e. $\mbt\in\bT^N$, where $I_d$ stands for the $d\times d$ unit matrix.

Integration of matrix functions and convergence of matrix valued sequences are understood entry-wise. 

In Section 7 we use the following stability result on matrix spectral factorization proved in \cite{EJL11} for $N=1$.

\begin{theorem}\label{th.2.1}
	$($\cite{EJL11},{\rm Th. 1}$)$ 
	Let $0<S^{\{n\}}\in L^1(\bT)$, $n=0,1,2,\ldots$, be a sequence of positive definite integrable matrix functions such that
$$
	\|S^{\{n\}}-S^{\{0\}}\|_{L^1(\bT)}\to 0 \text{ and }\; 
	\int_{\bT}\log\det S^{\{n\}}(t)\,dt\to  	\int_{\bT}\log\det S^{\{0\}}(t)\,dt.
$$
Then
$$
\|S^{\{n\}}_+-S^{\{0\}}_+\|_{L^2(\bT)}\to 0.
$$
\end{theorem}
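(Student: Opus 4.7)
The plan is a compactness argument in $L^2$: fix a canonical normalization of the spectral factor, extract a weakly convergent subsequence from $\{S^{\{n\}}_+\}$ in $L^2(\bT)^{d\times d}$, show the weak limit is forced to be $S^{\{0\}}_+$, and then upgrade weak convergence to strong convergence via norm equality. A standard subsequence argument then gives convergence of the full sequence. Throughout, the spectral factors are normalized so that $S_+^{\{n\}}(0)$ is, say, lower triangular with positive diagonal, eliminating the unitary ambiguity of MSF.

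For boundedness in $L^2$, I would use the identity
\[
\|S^{\{n\}}_+\|_{L^2(\bT)}^2 = \operatorname{tr}\int_{\bT} S^{\{n\}}_+(S^{\{n\}}_+)^* \, d\mu_1 = \operatorname{tr}\int_{\bT} S^{\{n\}} \, d\mu_1 \xrightarrow[n\to\infty]{} \operatorname{tr}\int_{\bT} S^{\{0\}} \, d\mu_1 = \|S^{\{0\}}_+\|_{L^2(\bT)}^2.
\]
Hence a subsequence (not relabeled) converges weakly in $L^2$ to some $F\in L^2(\bT)^{d\times d}$; the limit $F$ lies in $\bH^2(\bT)^{d\times d}$ because vanishing of negative Fourier coefficients is preserved under weak limits, and $F(0)=\lim_n S^{\{n\}}_+(0)$ because $F(0)=C_{\mbo}\{F\}$ is a weakly continuous functional.

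The identification step is the crux. First, for any $\xi\in\bC^d$ and any non-negative $g\in L^\infty(\bT)$, weak convergence of $(S^{\{n\}}_+)^*\xi$ in $L^2(\bT)^d$ combined with lower semicontinuity of the $L^2$-norm yields
\[
\int_{\bT} g\, \xi^* F F^*\xi\, d\mu_1 \le \liminf_n \int_{\bT} g\, \xi^* S^{\{n\}} \xi\, d\mu_1 = \int_{\bT} g\, \xi^* S^{\{0\}}\xi\, d\mu_1,
\]
so $FF^*\le S^{\{0\}}$ a.e. Second, $\det S^{\{n\}}_+$ is a scalar outer function, so Jensen's equality gives $\log|\det S^{\{n\}}_+(0)|=\tfrac12 \int_{\bT}\log\det S^{\{n\}}\,d\mu_1$; the hypothesis on $\log\det$ lets this pass to the limit, yielding $\log|\det F(0)|=\tfrac12\int_{\bT}\log\det S^{\{0\}}\,d\mu_1$. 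Chaining Jensen's inequality for $\det F\in\bH^{r}$ with monotonicity of determinants on positive matrices produces
\[
\log|\det F(0)|\le \int_{\bT}\log|\det F|\,d\mu_1 = \tfrac12\int_{\bT}\log\det(FF^*)\,d\mu_1\le \tfrac12\int_{\bT}\log\det S^{\{0\}}\,d\mu_1,
\]
with the two ends equal, forcing equality throughout. Equality in the rightmost inequality shows $\det(FF^*)=\det S^{\{0\}}$ a.e., which combined with $FF^*\le S^{\{0\}}$ forces the pointwise identity $FF^*=S^{\{0\}}$ (two positive matrices with one dominating the other and equal determinants must coincide); equality in the Jensen step shows $\det F$ is outer, so by Lemma \ref{Louter} $F$ is an outer matrix factor. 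By uniqueness of MSF together with the chosen normalization, $F=S^{\{0\}}_+$.

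The final step upgrades the weak convergence $S^{\{n\}}_+\to S^{\{0\}}_+$ to strong $L^2$ convergence, using the norm identity displayed in the second paragraph together with the Hilbert-space fact that weak plus norm convergence implies strong convergence. Because the weak limit is forced to be $S^{\{0\}}_+$ along every subsequence, the full sequence converges. The main obstacle is the identification step: specifically, extracting the equality $FF^*=S^{\{0\}}$ from weak convergence alone, which works only because the $\log\det$ hypothesis pins down $|\det F(0)|$ and saturates the chain of Jensen-type inequalities — without it, one only obtains $FF^*\le S^{\{0\}}$, which is insufficient.
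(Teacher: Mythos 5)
The paper does not actually prove Theorem \ref{th.2.1}: it is imported verbatim from \cite{EJL11} (Th.~1) and used as a black box in Section~7, so there is no in-paper argument to compare yours against line by line. Judged on its own, your proof is correct and complete. The two load-bearing points both check out: (i) the weighted lower-semicontinuity step is legitimate because $u\mapsto(\int_{\bT}g|u|^2\,d\mu_1)^{1/2}$ is a continuous convex functional on $L^2$ for bounded $g\geq 0$, hence weakly lower semicontinuous, and running it over a countable dense set of $\xi$ gives $FF^*\leq S^{\{0\}}$ a.e.; (ii) the determinant squeeze works because $\det S_+^{\{n\}}$ is outer, so $\log|\det S_+^{\{n\}}(0)|=\tfrac12\int_{\bT}\log\det S^{\{n\}}\,d\mu_1$ passes to the limit by hypothesis, and then equality through the chain forces both $\det(FF^*)=\det S^{\{0\}}$ a.e. (whence $FF^*=S^{\{0\}}$, since $S^{\{0\}}=FF^*+C$ with $C\geq 0$ and $\det(FF^*+C)=\det(FF^*)>0$ forces $C=0$) and outerness of $\det F$ via the equality case of \eqref{logh}, i.e.\ \eqref{outer2}. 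The only points worth making explicit in a written version are the normalization bookkeeping (that $F(\mbo)=\lim S_+^{\{n\}}(\mbo)$ inherits the chosen normal form and that $\det F(\mbo)\neq 0$ makes it nondegenerate, so the uniqueness unitary is forced to be $I$) and the standing assumption $\int_{\bT}\log\det S^{\{0\}}\,d\mu_1>-\infty$, without which the squeeze is vacuous; both are implicit in your write-up and neither is a gap. Your closing remark correctly identifies the role of the $\log\det$ hypothesis: without it one only gets $FF^*\leq S^{\{0\}}$, and the theorem is in fact false in that generality.
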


The proof of the following proposition, which is valid for arbitrary finite measure space,  follows  easily  from
the necessary and sufficient condition for the convergence in the
norm: $\|f_n-f\|_{L^p}\to 0$, where $p\geq 1$, if and only if $f_n\rightrightarrows
f$ and $\sup_{n>k,\mu(E)<\delta}\int_E|f_n|^pd\mu \to 0$ as $k\to\infty,
\delta\to 0$, where $\rightrightarrows$ stands for the convergence in measure.

\begin{proposition}
	If $h_n\in L^1$, $f_n\in L^p$, $p\geq 1$, $ n=0,1,\dots$, $ f_n \rightrightarrows f_0$, $	|f_n(t)|^p\leq |h_n(t)|$ and $\|h_n-h_0\|_{L^1}\to 0$, then
	$\|f_n-f_0\|_{L^p}\to 0$.
\end{proposition}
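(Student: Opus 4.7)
The plan is to apply the stated criterion for $L^p$-norm convergence verbatim. Convergence in measure $f_n \rightrightarrows f_0$ is given, so the only thing to check is the tail uniform absolute continuity
\[
\sup_{n>k,\,\mu(E)<\delta}\int_E |f_n|^p\,d\mu\to 0\quad\text{as }k\to\infty,\ \delta\to 0.
\]
Before that, I would briefly record that $f_0\in L^p$ so the conclusion even makes sense: extract a subsequence along which $f_{n_j}\to f_0$ almost everywhere, pass to a further subsequence along which $h_{n_j}\to h_0$ a.e. (possible because of $L^1$-convergence), and conclude from $|f_{n_j}|^p\leq |h_{n_j}|$ that $|f_0|^p\leq |h_0|$ a.e., hence $f_0\in L^p$.

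For the tail condition, I would use the domination $|f_n|^p\leq |h_n|$ to reduce to a uniform absolute continuity statement for $\{h_n\}$. This is the standard consequence of $L^1$-convergence: given $\varepsilon>0$, pick $N$ with $\|h_n-h_0\|_{L^1}<\varepsilon/2$ for every $n\geq N$, then (since $h_0\in L^1$ is absolutely continuous with respect to $\mu$) pick $\delta>0$ so small that $\int_E |h_0|\,d\mu<\varepsilon/2$ whenever $\mu(E)<\delta$. Then for every $n\geq N$ and every measurable $E$ with $\mu(E)<\delta$,
\[
\int_E |f_n|^p\,d\mu\;\leq\;\int_E|h_n|\,d\mu\;\leq\;\int_E|h_n-h_0|\,d\mu+\int_E|h_0|\,d\mu\;<\;\varepsilon.
\]
Taking $k\geq N$, this gives $\sup_{n>k,\,\mu(E)<\delta}\int_E|f_n|^p\,d\mu\leq\varepsilon$, which is exactly the required tail condition.

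Combining the convergence in measure with the tail uniform absolute continuity via the criterion quoted in the paper then yields $\|f_n-f_0\|_{L^p}\to 0$. There is no real obstacle here: the only subtlety is that the quoted criterion is phrased as a tail condition (sup over $n>k$) rather than full uniform integrability, but the displayed estimate above is precisely of that tail form, so no separate treatment of the finitely many indices $n\leq N$ is needed.
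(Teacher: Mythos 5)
Your proof is correct and is exactly the argument the paper intends: it applies the quoted necessary-and-sufficient criterion, with the tail uniform absolute continuity of $\int_E|f_n|^p\,d\mu$ deduced from the domination $|f_n|^p\leq|h_n|$ together with the $L^1$-convergence of $h_n$ (the paper itself only states that the proposition ``follows easily'' from this criterion and gives no further detail). Your additional remark verifying $f_0\in L^p$ via a.e.\ convergent subsequences is a harmless and reasonable supplement.
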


\begin{corollary}
	If $\|f_n\!-\!f\|_{L^p}\to 0$, $p\!\geq\!1$, and
	$|u_n(t)|\!\leq\!1$, $n\!=\!0,1, \dots, u_n\rightrightarrows u$,
	then $\|f_nu_n-fu\|_{L^p}\to 0$.
\end{corollary}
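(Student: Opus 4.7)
The plan is to reduce this directly to the preceding proposition with the identifications $F_n := f_n u_n$, $F_0 := fu$, and $H_n := |f_n|^p$, $H_0 := |f|^p$. The proposition's conclusion would then give exactly $\|f_n u_n - fu\|_{L^p} \to 0$, so the work reduces to checking its three hypotheses: the pointwise domination $|F_n|^p \leq H_n$, the $L^1$-convergence $\|H_n - H_0\|_{L^1} \to 0$, and the convergence in measure $F_n \rightrightarrows F_0$.

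The domination is immediate from $|u_n| \leq 1$, and one also notes that $fu \in L^p$ since $|u| \leq 1$ a.e.\ (as the limit in measure of a sequence uniformly bounded by $1$). For the $L^1$-convergence of $|f_n|^p$ to $|f|^p$, I would use the elementary inequality
$$
\bigl||a|^p - |b|^p\bigr| \leq p \max(|a|, |b|)^{p-1} |a - b|, \qquad p \geq 1,
$$
together with H\"older with conjugate exponents $p/(p-1)$ and $p$. This bounds $\||f_n|^p - |f|^p\|_{L^1}$ by $\|f_n - f\|_{L^p}$ times a factor controlled by $\|f_n\|_{L^p}^{p-1} + \|f\|_{L^p}^{p-1}$, which stays bounded because $L^p$-convergence forces $\|f_n\|_{L^p} \to \|f\|_{L^p}$. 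The case $p=1$ is easier, being immediate from $||f_n|-|f||\leq|f_n-f|$.

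For the convergence $f_n u_n \rightrightarrows fu$, I would split $f_n u_n - fu = u_n (f_n - f) + f(u_n - u)$. The first summand is bounded by $|f_n - f|$, which tends to $0$ in $L^p$ and hence in measure. For the second, I would truncate: given $\ep > 0$, choose $M$ so that $\mu_N(\{|f| > M\}) < \ep$ (possible because $f \in L^p$ on a finite-measure space), and note that on the complementary set $|f(u_n - u)| \leq M |u_n - u| \rightrightarrows 0$ by hypothesis, while the bad set can be discarded at cost $\ep$. Both summands then converge to $0$ in measure, and applying the proposition closes the argument.

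The only step that is not entirely mechanical is deriving $L^1$-convergence of $|f_n|^p$ from $L^p$-convergence of $f_n$, but the H\"older argument above handles it cleanly; the remainder is standard measure-theoretic bookkeeping.
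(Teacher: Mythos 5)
Your reduction to Proposition 2.2 via $F_n:=f_nu_n$, $H_n:=|f_n|^p$ is exactly the route the paper intends (it states the corollary without proof as an immediate consequence of the proposition), and your verification of the three hypotheses --- the domination $|f_nu_n|^p\leq|f_n|^p$, the $L^1$-convergence of $|f_n|^p$ via the mean-value inequality and H\"older, and the convergence in measure via the splitting $u_n(f_n-f)+f(u_n-u)$ with truncation of $f$ --- is correct on the finite measure space $\bT^N$. Nothing is missing.
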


		\section{Uniqueness of multivariable matrix spectral factorization}

Definitions \ref{def1} and \ref{def2} allow us to formulate generalized Smirnov's  theorem (namely, $L^p(\bT)\ni f=g/h$, $g\in \bH^q$, $h\in\bH^r_O$ $\implies$ $f\in\bH^p$; see \cite[p.109]{Koo}) for the $N$-dimensional case. This result is used to provide a simple proof of the uniqueness of the Helson-Lowdenslager MSF theorem (see Proposition 3.2 below). Note that the uniqueness is not discussed  in the original formulation of this theorem in \cite{HelLow58}, \cite{HelLow61}.

\begin{proposition}\label{Smirnov2}
	Suppose $f\in L^p(\bT^N)$ can be represented as a ratio 
	\begin{equation}\label{ratio}
	f=\frac{g}{h}
	\end{equation}
	with $g\in\bH^q(\bT^N)$ and $h\in\bH^r_O(\bT^N)$, where $p,q,r>0$ are arbitrary. Then
	\begin{equation}\label{fHpTN}
	f\in\bH^p(\bT^N).
	\end{equation}
\end{proposition}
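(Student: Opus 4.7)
The natural strategy is induction on $N$, with the base case $N=1$ being the classical Smirnov theorem already cited in the paragraph preceding the proposition. For the inductive step, by Definition \ref{def1} it suffices to verify the two conditions
\begin{equation*}
f_{t_2,\ldots,t_N}\in\bH^p \text{ for a.e. }(t_2,\ldots,t_N)\in\bT^{N-1}, \qquad \hat f_1\in\bH^p(\bT^{N-1}).
\end{equation*}

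For the first condition, I would fix a generic $(t_2,\ldots,t_N)$. By Fubini, $f_{t_2,\ldots,t_N}\in L^p(\bT)$ a.e.; by the definitions of $\bH^q(\bT^N)$ and $\bH^r_O(\bT^N)$, we also have $g_{t_2,\ldots,t_N}\in\bH^q$ and $h_{t_2,\ldots,t_N}\in\bH^r_O$ a.e. The identity \eqref{ratio} restricts to $f_{t_2,\ldots,t_N}=g_{t_2,\ldots,t_N}/h_{t_2,\ldots,t_N}$ on $\bT$, so the one-dimensional Smirnov theorem yields $f_{t_2,\ldots,t_N}\in\bH^p$.

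The second condition is the heart of the argument. First, the mean-value/subharmonicity bound $|h(0)|^p\leq\int_\bT|h|^p\,d\mu_1$ applied to $f_{t_2,\ldots,t_N}\in\bH^p$ and then Fubini give
\begin{equation*}
\int_{\bT^{N-1}}|\hat f_1|^p\,d\mu_{N-1}\leq\int_{\bT^N}|f|^p\,d\mu_N<\infty,
\end{equation*}
so $\hat f_1\in L^p(\bT^{N-1})$. Next, $\hat g_1\in\bH^q(\bT^{N-1})$ by Definition \ref{def1} applied to $g$, and $\hat h_1\in\bH^r_O(\bT^{N-1})$ by Definition \ref{def2} applied to $h$. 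The key identity to establish is $\hat f_1=\hat g_1/\hat h_1$ a.e.: for a.e.\ $(t_2,\ldots,t_N)$, the boundary identity $g_{t_2,\ldots,t_N}=f_{t_2,\ldots,t_N}h_{t_2,\ldots,t_N}$ on $\bT$ extends by uniqueness of analytic continuation to $z\in\bD$, and specializing at $z=0$ yields $\hat g_1=\hat f_1\hat h_1$. Division is justified because $h_{t_2,\ldots,t_N}$ is outer, hence nonvanishing on $\bD$, so $\hat h_1(t_2,\ldots,t_N)=h_{t_2,\ldots,t_N}(0)\neq 0$ a.e. The induction hypothesis applied to $\hat f_1$, $\hat g_1$, $\hat h_1$ on $\bT^{N-1}$ now gives $\hat f_1\in\bH^p(\bT^{N-1})$.

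The main obstacle I anticipate is the bookkeeping at the ``pointwise'' step: one must argue that the representation \eqref{ratio}, which a priori holds only as an equation of boundary-value $L^p$ functions on $\bT^N$, passes to the interior of the first disk so that evaluation at $z=0$ makes sense and produces the quotient identity for $\hat f_1$. This requires invoking the one-dimensional Smirnov theorem for the slices before one can use the nonvanishing of outer functions in $\bD$; the rest is a clean induction.
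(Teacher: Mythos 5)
Your proposal is correct and follows essentially the same route as the paper: induction on $N$ via Definition \ref{def1}, the one-dimensional Smirnov theorem applied to the slices $f_{t_2,\ldots,t_N}=g_{t_2,\ldots,t_N}/h_{t_2,\ldots,t_N}$, the subharmonicity bound to get $\hat f_1\in L^p(\bT^{N-1})$ (the paper isolates this as Lemma \ref{fLp}), and the quotient identity $\hat f_1=\hat g_1/\hat h_1$ fed into the induction hypothesis. Your extra remarks on justifying the evaluation at $z=0$ and the nonvanishing of the outer slices only make explicit what the paper leaves tacit.
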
 

First we prove the following 
\begin{lemma}\label{fLp}
	If $f\in L^p(\bT^N)$, $p>0$, and 
	$f_{t_2,t_3,\ldots,t_N}\in\bH^p$ 	for a.e. $(t_2,t_3,\ldots,t_N)\in\bT^{N-1}$, then $($see \eqref{def2.12}$)$
	\begin{equation}\label{f0Lp}
	\hat{f}_1\in L^p(\bT^{N-1}).
	\end{equation}
\end{lemma}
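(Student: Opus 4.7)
\textbf{Proof plan for Lemma \ref{fLp}.}

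The plan is to rely on the sharp sub-mean-value inequality for Hardy spaces: for every $g\in\bH^p$ with $p>0$,
\[
|g(0)|^p\le \int_{\bT}|g(t)|^p\,d\mu_1.
\]
For $p\ge 1$ this is Jensen's inequality (applied to the mean-value representation of $g(0)$), and for $0<p<1$ it is the subharmonicity of $|g|^p$ (which holds since $g$ is analytic) combined with $L^p$-convergence of the dilates $g_r$ to the boundary values as $r\to 1^-$. This inequality is precisely \eqref{logh} in disguise for $p=1$, and its general-$p$ version is the classical fact I need.

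Now apply this to the slice $g=f_{t_2,\ldots,t_N}$, which by hypothesis lies in $\bH^p$ for a.e.\ $(t_2,\ldots,t_N)\in\bT^{N-1}$. Recalling the definition \eqref{def2.12}, this gives the pointwise estimate
\[
|\hat{f}_1(t_2,\ldots,t_N)|^p
=|f_{t_2,\ldots,t_N}(0)|^p
\le \int_{\bT}|f(t,t_2,\ldots,t_N)|^p\,d\mu_1(t)
\]
for a.e.\ $(t_2,\ldots,t_N)\in\bT^{N-1}$. Integrating over $\bT^{N-1}$ and applying Fubini's theorem to the right-hand side,
\[
\int_{\bT^{N-1}}|\hat{f}_1|^p\,d\mu_{N-1}
\le \int_{\bT^N}|f|^p\,d\mu_N
=\|f\|_{L^p(\bT^N)}^p<\infty,
\]
which is \eqref{f0Lp}.

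The only point that requires a separate word is measurability of $\hat{f}_1$ on $\bT^{N-1}$, since the pointwise bound by itself is not enough to conclude $\hat{f}_1\in L^p$. For $p\ge 1$ this is immediate: the mean-value property gives $\hat{f}_1=\hat{f}$ a.e., where $\hat{f}$ is the integral in \eqref{2.81}, and Fubini delivers measurability. For $0<p<1$ one instead approximates, using the fact that for each $0<r<1$ the dilate $(t_1,\ldots,t_N)\mapsto f_{t_2,\ldots,t_N}(r t_1)$ is jointly measurable (it is continuous in $t_1$ and the Taylor coefficients at $0$ depend measurably on $(t_2,\ldots,t_N)$), so evaluating at $t_1=0$ and letting $r\to 1^-$ (along which $f_{t_2,\ldots,t_N}(r\cdot 0)=f_{t_2,\ldots,t_N}(0)$ is already constant) exhibits $\hat{f}_1$ as a pointwise limit of measurable functions.

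I expect the sub-mean-value estimate for $0<p<1$ and the measurability technicality for the same range to be the only nonroutine steps; the rest is Fubini.
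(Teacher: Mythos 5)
Your proof is correct and follows essentially the same route as the paper: the key step in both is the sub-mean-value inequality $|h(0)|^p\le\int_{\bT}|h|^p\,d\mu_1$ for $h\in\bH^p$, obtained from subharmonicity of $|h|^p$ and the monotonicity of the integral means, followed by slicing and Fubini. Your additional remark on the measurability of $\hat{f}_1$ for $0<p<1$ is a reasonable technical supplement that the paper passes over silently, but it does not alter the argument.
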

\begin{proof}
	Indeed, if $h\in\bH^p$, $p>0$, then $|h|^p=\exp(p\log|h|)$ is a subharmonic function in $\bD$, and therefore $	\int\nolimits_0^{2\pi}|h(\rho e^{i\theta})|^p\,d\theta$ is increasing on $(0,1)$ as a function of $\rho$ (see \cite[\S 1.6]{Garn}). Thus, $|h(0)|^p=\lim_{\rho\to 0+}(1/2\pi)\int\nolimits_0^{2\pi}|h(\rho e^{i\theta})|^p\,d\theta\leq \lim_{\rho\to 1-}(1/2\pi)\int\nolimits_0^{2\pi}|h(\rho e^{i\theta})|^p\,d\theta =\int_{\bT} |h|^p\,d\mu_1.$ 
	Consequently
$$
	\int_{\bT^{N-1}}|\hat{f}_1|^p\,d\mu_{N-1}\!=\!\!\int_{\bT^{N-1}}|f(0,\cdot)|^p\,d\mu_{N-1}\leq \int_{\bT^{N-1}}\left(\int_{\bT}|f(t,\cdot)|^p\,d\mu_1\right)\,d\mu_{N-1}\!=\!\!
	\int_{\bT^N}|f|^p\,d\mu_N.
$$
	Thus, \eqref{f0Lp} holds.
\end{proof}

{\em Proof of Proposition \ref{Smirnov2}.} The proof can be carried out by induction with respect to $N$ and the goal is achieved by using Definitions \ref{def1} and \ref{def2}. Indeed, for $N=1$, the statement amounts to the above-mentioned generalized Smirnov's theorem. Thus, we can make the assumption that the proposition is correct if we take $N-1$ instead of $N$. 

On the other hand, the hypothesis of the proposition implies that $$f_{t_2,t_3,\ldots,t_N}=g_{t_2,t_3,\ldots,t_N}/h_{t_2,t_3,\ldots,t_N}\in \bH^p$$ due to the one-dimensional theorem, as long as \eqref{2.51} holds and Definitions 2.1 and 2.2 imply that $g_{t_2,t_3,\ldots,t_N}\in \bH^q$ and $h_{t_2,t_3,\ldots,t_N}\in \bH^r_O$. This in turn implies \eqref{f0Lp} by Lemma \ref{fLp}. We also have 	$\hat{f}_1=f(0,\cdot)=g(0,\cdot) /h(0,\cdot)=\hat{g}_1/\hat{h}_1$ with $\hat{g}_1\in\bH^p(\bT^{N-1})$ and $\hat{h}_1\in\bH^p_O(\bT^{N-1})$ (by virtue of Definitions \ref{def1} and \ref{def2}). Hence, $\hat{f}_1\in \bH^p(\bT^{N-1})$ by the assumption of the induction and \eqref{fHpTN} holds by  Definition \ref{def1}. \hfill$\Box$

\begin{remark}\label{rem3.1}
	Using H\"{o}lder's inequality and  induction  similar to the proof above, one can prove that if  $f\in \bH^p(\bT^N)$ and $g\in \bH^q(\bT^N)$, then $fg\in \bH^{\frac{pq}{p+q}}(\bT^N)$. $($Therefore, the exponent $r$ in Definition \ref{def3}  can be taken equal to $p/d$.$)$ Furthermore, if $f\in \bH^p_O(\bT^N)$ and $g\in \bH^q_O(\bT^N)$ then $fg\in \bH^{\frac{pq}{p+q}}_O(\bT^N)$.
\end{remark}

We are now ready to prove the uniqueness of factorization in the Helson-Lowdenslager MSF theorem, which is similar to the one presented in \cite{EJL09} for the Wiener-Masani MSF theorem.

\begin{proposition}
	Let $0<S\in L^1(\bT^N)^{d\times d}$ and suppose \eqref{HL-PW} holds. If
	\begin{equation}\label{SS}
	S=S_+S_+^*=\Xi_+\Xi_+^*
	\end{equation}
	are two spectral factorizations of $S$ with spectral factors of outer type,
	$S_+$, $\Xi_+\in  \bH^2_O(\bT^N)^{d\times d}$, then there exists a constant unitary matrix $U\in\bC^{d\times d}$ such that
	\begin{equation}\label{SU}
	S_+=\Xi_+U.
	\end{equation}
\end{proposition}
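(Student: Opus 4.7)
The plan is to define $U := \Xi_+^{-1}S_+$ and show that $U$ is a constant unitary matrix, from which \eqref{SU} follows immediately upon rearrangement. Since $S_+,\Xi_+\in\bH^2_O(\bT^N)^{d\times d}$, Definition \ref{def3} supplies some $r>0$ with $\det S_+,\det\Xi_+\in\bH^r_O(\bT^N)$, and Lemma \ref{Louter} guarantees these determinants are nonzero a.e.\ on $\bT^N$; consequently $S_+$ and $\Xi_+$ are pointwise invertible a.e. Substituting into \eqref{SS} yields $UU^*=I_d$ a.e., so $U$ is unitary and, in particular, $U\in L^\iy(\bT^N)^{d\times d}$.

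Next I would verify that each entry $U_{ij}$ lies in $\calA(\bT^N)$. Writing $\Xi_+^{-1}=\adj(\Xi_+)/\det\Xi_+$, the entry $U_{ij}$ has the form $P_{ij}/\det\Xi_+$, where $P_{ij}$ is a polynomial expression in entries of $\Xi_+$ and $S_+$, all of which belong to $\bH^2(\bT^N)$. Iterating the product rule in Remark \ref{rem3.1} gives $P_{ij}\in\bH^q(\bT^N)$ for some $q>0$, while $\det\Xi_+\in\bH^r_O(\bT^N)$. Since $U_{ij}\in L^\iy\sbs L^1(\bT^N)$, the multivariable Smirnov theorem (Proposition \ref{Smirnov2}) upgrades the quotient to $U_{ij}\in\bH^1(\bT^N)\sbs\calA(\bT^N)$, so the Fourier coefficients of $U_{ij}$ vanish outside $H_N$.

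Running the symmetric argument with the roles of $S_+$ and $\Xi_+$ exchanged, the entries of $U^*=U^{-1}=S_+^{-1}\Xi_+$ likewise lie in $\calA(\bT^N)$. Since $(U^*)_{ij}=\ol{U_{ji}}$, the Fourier coefficients of $U_{ji}$ must vanish outside $-H_N$. Invoking property (ii) of $H_N$ recalled in Section 2, namely $H_N\cap(-H_N)=\{\mbo\}$, each $U_{ij}$ carries at most a single nonzero Fourier coefficient and is therefore constant a.e. Thus $U\in\bC^{d\times d}$ is a constant unitary matrix, and \eqref{SU} follows by rearranging $U=\Xi_+^{-1}S_+$.

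The main obstacle is the bookkeeping in the middle paragraph: confirming that the cofactor-expansion numerator lies in some $\bH^q(\bT^N)$ requires iterated use of Remark \ref{rem3.1}, and it is crucial to exploit the $L^\iy$ bound coming from unitarity in order to apply Proposition \ref{Smirnov2} with $p\geq 1$. This lands $U_{ij}$ directly inside the integrable class $\bH^1(\bT^N)$, where the characterization via Fourier support on $H_N$ is immediate, rather than in a recursively defined $\bH^p(\bT^N)$ with $p<1$ for which that characterization would require additional justification.
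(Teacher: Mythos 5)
Your proposal is correct and follows essentially the same route as the paper: define $U=\Xi_+^{-1}S_+$, use unitarity to get $U\in L^\iy$, apply the multivariable Smirnov theorem (Proposition \ref{Smirnov2}) via the adjugate representation of $\Xi_+^{-1}$ to place the entries of both $U$ and $U^{-1}=U^*$ in the analytic class, and conclude constancy from the fact that only $C_\mbo$ can survive. Your last paragraph merely makes explicit (via $H_N\cap(-H_N)=\{\mbo\}$) a step the paper leaves implicit, and your choice of $p=1$ instead of $p=\iy$ in Smirnov is an inessential variation.
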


\begin{proof}
	The equations in \eqref{SS} imply that 
	$$
	\big(\Xi_+^{-1}S_+\big)\big(S_+^*(\Xi_+^*)^{-1}\big)=I_d,
	$$
	so that $U=\Xi_+^{-1}S_+$ is a unitary matrix function,  i.e.,
$U^{-1}=U^*$  a.e. on $\bT^N$.
	The entries of a unitary matrix are bounded. Hence
	$$
	U\in L^\iy(\bT^N)^{d\times d}.
	$$
	Since $\det\Xi_+$ is of outer analytic type and $\Xi_+^{-1}=(\det\Xi_+)^{-1}\adj(\Xi_+)$, we have 
	$$
	U=\frac{V}{w}\,, \;\text{ where }\; V\in\bH^q (\bT^N)^{d\times d} \text{ and } w\in\bH^r(\bT^N) \text{ for some } q,r>0
	$$
	(see Remark \ref{rem3.1}). Hence we can apply Proposition \ref{Smirnov2} for the entries of $U$ and conclude that
$$
	U\in \bH^\iy(\bT^N)^{d\times d}.
$$
	By changing the roles of $\Xi_+$ and $S_+$ in this discussion, we get 
$$
	U^{-1}\in \bH^\iy(\bT^N)^{d\times d}.
$$
Hence, 
	$$
	U,\,U^{*}\in \bH^\iy(\bT^N)^{d\times d}.
	$$
which means that entries of $U$ have all Fourier coefficients except $C_\mbo$ equal to $0$. Consequently, they are constant.
\end{proof}	

		\section{Multivariable 	scalar spectral factorization}
	
	Helson-Lowdenslager spectral factorization theorem in the scalar case asserts that: {\em if $0< f\in L^1(\bT^N)$ and $\int_{\bT^N}\log f\,d\mu_N>-\iy$, then there exists a unique $($up to a constant factor of modulus 1$)$ function $f_+\in\bH^2_O(\bT^N)$ such that }
	\begin{equation}\label{ssf1}
	f=|f_+|^2  \text{ a.e. on } \bT^N.
	\end{equation}
	
	If $N=1$, then the function $f_+$ can be written explicitly: $f_+(t)=\lim_{r\to 1-}f_+(rt)$, where
	$$
	f_+(z)=c\cdot \exp\left(\frac 1{4\pi}
	\int\nolimits_0^{2\pi}\frac{e^{i\theta}+z}{e^{i\theta}-z}\log
	f(e^{i\theta})\,d\theta\right),\;\;|c|=1,\;\;|z|<1
	$$
	(cf.  \eqref{outer}).
	Note that $f_+$ can be also written as 
\begin{equation*}
f_+(t)=c\cdot \sqrt{f(t)}\exp\left(\frac12 i \calS\big(\log f\big)(t)\right),
\end{equation*} 
where $\calS(f)$ stands for the conjugate of $f\in L^1(\bT)$:
$$
\calS(f)(e^{i\tau})=\frac{1}{2\pi}(P) \int_0^{2\pi}f(e^{i\theta}) \cot\frac{\tau-\theta}{2}\,d\theta.
$$	
	
	 In the multivariable case, it is sufficient for our purposes to construct a factorization
$$
f=f_{+,1}\ol{f_{+,1}},
$$
where	$f_{+,1}\in\bH_O^2(\bT^N_1)$. 
Such factorization can be written in the explicit form 
\begin{equation}\label{21.8.4}
f_{+,1}(\mbt)=\sqrt{f(\mbt)}\exp\left(i\calS_1\big(\log\sqrt{f(\mbt)}\big) \right)
\end{equation}
if we introduce the singular operator $\calS_1:L^1(\bT^N)\to L^p(\bT^N)$, $p<1$, with respect to the first variable by the formula	
$$
\calS_1(h)(e^{i\tau},t_{2},\ldots,t_N)=\frac{1}{2\pi}(P) \int_0^{2\pi}h(e^{i\theta},t_{2},\ldots,t_N) \cot\frac{\tau-\theta}{2}\,d\theta.
$$	
Note that we can write $f_+$ in \eqref{ssf1} explicitly as
\begin{equation}\label{4.23}
	f_+(t_1,t_2,\ldots,t_N)= {f_{+,1}(t_1,t_2,\ldots,t_N)}\prod_{k=1}^{N-1}\exp\big(i\calS_1(\check{f_k})(t_{k+1},\ldots,t_N)\big),
\end{equation}
	where  $\check{f_k}:\bT^{N-k}\to \bR$, $k=1,2,\ldots,N-1$, are the functions defined by (cf. \eqref{def2.13}):
\begin{equation}\label{4.4}
\check{f_k}(t_{k+1},\ldots,t_N)=\int_{\bT^{k}}
\log \sqrt{f(\cdot,t_{k+1},\ldots,t_N)}\,d\mu_k
\end{equation} 
In particular, for $2\leq l\leq N$, we have $f=f_{+,l}\ol{f_{+,l}}$, where
$$
{f_{+,{l}}(t_1,t_2,\ldots,t_N)}:=
{f_{+,1}(t_1,t_2,\ldots,t_N)}\prod_{k=1}^{l-1}\exp\big(i\calS_1(\check{f_k})(t_{k+1},\ldots,t_N)\big)\in H^2_O(T^N_{l})
$$
and, because of \eqref{outer2} and Fubini's theorem (see also \eqref{2.234}), the expression \eqref{4.4} is equal to
$$
\frac12 \int_0^{2\pi}\log |f_{+,l}(0,\ldots,0,e^{i\theta},t_{l+1},\ldots,t_N)|\,d\theta=\frac12
\log|f_{+}(0,\ldots,0,0,t_{l+1},\ldots,t_N)|.
$$

\section{Description of $S = S_{+,1} S_{+,1}^*$ factorization algorithm}

In this section, we describe the algorithmic steps for the factorization
\begin{equation}\label{5.11}
S(t)=S_{+,1}(t)S_{+,1}^*(t),
\end{equation}
where 
\begin{equation}\label{6.4}
S_{+,1}\in\bH^2(\bT^N_1)_O^{d\times d},
\end{equation}
of a matrix \eqref{S} which satisfies \eqref {HL-PW}. The existence of such factorization follows from the corresponding 1-D Wiener-Masani theorem since Fubini’s theorem guarantees that, for a.e. $(t_2,\ldots,t_N)\in\bT^{N-1}$,
$$
S(\cdot,t_2,\ldots,t_N)\in L^1(\bT)\;\text{ and }\; \log\det S(\cdot,t_2,\ldots,t_N)\in L^1(\bT).
$$
Below we provide constructive procedures for factorization \eqref{5.11}. 
These procedures stem from the corresponding 1-D MSF algorithm proposed in \cite{IEEE}. The main idea which  demonstrates the possibility of such generalization is presented in our recent paper \cite{GMJ20}.

\smallskip

{\bf Procedure 1.} We perform lower-upper factorization
\begin{equation}\label{7.1}
S(\mbt)=M_1(\mbt)M_1^*(\mbt),
\end{equation} 
where 
\begin{equation}\label{7.16}
M_1(\mbt)=\begin{pmatrix}f_1(\mbt)&0&\cdots&0&0\\
\xi_{21}(\mbt)&f_2(\mbt)&\cdots&0&0\\
\vdots&\vdots&\vdots&\vdots&\vdots\\
\xi_{d-1,1}(\mbt)&\xi_{d-1,2}(\mbt)&\cdots&f_{d-1}(\mbt)&0\\
\xi_{d1}(\mbt)&\xi_{d2}(\mbt)&\cdots&\xi_{d,d-1}(\mbt)&f_d(\mbt)
\end{pmatrix}
\end{equation}
with $f_i\in \bH^2_O(\bT^N_1)$, $1\leq i\leq d$, and $\xi_{ij}\in L^2(\bT^N)$, $2\leq i\leq d$, $1\leq j<i$. As in the $N = 1$ case, \eqref{7.1} can be achieved by pointwise Cholesky factorization of $S(\mbt)$ and then applying formula \eqref{21.8.4} for the diagonal entries.
Note that $\det M_1\in \bH^{2/d}_O(\bT^N_1)$.

\smallskip

{\bf Procedure 2.} The factor $S_{+,1}$ is represented as 
\begin{equation}\label{7.17}
S_{+,1}(\mbt)=M_1(\mbt)\mbU_2(\mbt)\mbU_3(\mbt)\ldots \mbU_r(\mbt).
\end{equation} 
Each $\mbU_m$, $m=2,3,\ldots,d$, has the form
\begin{equation}\label{6.5.6}
\mbU_m(\mbt)=\begin{pmatrix}U_{m}(\mbt)&0\\0&I_{r-m}\end{pmatrix},
\end{equation}
where $U_m$ is a unitary matrix function with a special structure
\begin{equation}\label{67.6}
U_m(\mbt)=\begin{pmatrix}u_{11}(\mbt)&u_{12}(\mbt)&\cdots&u_{1,m-1}(\mbt)&u_{1m}(\mbt)\\
u_{21}(\mbt)&u_{22}(\mbt)&\cdots&u_{2,m-1}(\mbt)&u_{2m}(\mbt)\\
\vdots&\vdots&\vdots&\vdots&\vdots\\
u_{m-1,1}(\mbt)&u_{m-1,2}(\mbt)&\cdots&u_{m-1,m-1}(\mbt)&u_{m-1,m}(\mbt)\\[2mm]
\wdt{u_{m1}}(\mbt)&\wdt{u_{m2}}(\mbt)&\cdots&\wdt{u_{m,m-1}}(\mbt)&\wdt{u_{mm}}(\mbt)\\
\end{pmatrix},
\end{equation}
(see \eqref{20.8.2}) with $u_{ij}\in\bH^\iy(\bT^N_1)$, such that 
\begin{equation}\label{67.7}
\det U_m(\mbt)=1 \;\text{ for a e. } \mbt\in\bT^N
\end{equation}
and 
$$
[Q_m]_{m\tm m}:=[M_1\mbU_2\ldots\mbU_m]_{m\tm m}\in \big(\bH^2(\bT^N_1)
\big)_O^{m\tm m}.
$$

\smallskip

{\bf Procedure 3.} The unitary matrix functions \eqref{6.5.6} are constructed recursively. We assume that $\mbU_2, \mbU_3,\ldots, \mbU_{m-1}$ have already been constructed and obtain $U_m$ by the following steps:

{\bf Step 1.} Consider the matrix function $F$ of the form
\begin{equation}\label{7F}
F(\mbt)=\begin{pmatrix}1&0&0&\cdots&0&0\\
0&1&0&\cdots&0&0\\
0&0&1&\cdots&0&0\\
\vdots&\vdots&\vdots&\vdots&\vdots&\vdots\\
0&0&0&\cdots&1&0\\
\zeta_{1}(\mbt)&\zeta_{2}(\mbt)&\zeta_{3}(\mbt)&\cdots&\zeta_{m-1}(\mbt)&f_m(\mbt)
\end{pmatrix},
\end{equation}
where the last row of $F$ is the same as the last row of $[Q_{m-1}]_{m\tm m}$.
We have   $\zeta_{i}\in L^2(\bT^N)$, $1\leq i\leq m-1$, and $f_m\in \bH_O^2(\bT^N_1)$.

Suppose
\begin{gather*}
f_m(t_1,t_2,\ldots,t_N)=\sum\nolimits_{0}^\iy t_1^k \gm_{k}(t_2,\ldots,t_N),\;\text{ where } \;\gm_{k}\in L^2(\bT^{N-1}),
\\
\zeta_i(t_1,t_2,\ldots,t_N)=\sum\nolimits_{k=-\iy}^\iy t_1^k \al_{i,k}(t_2,\ldots,t_N),\;\text{ where } \;\al_{i,k}\in L^2(\bT^{N-1}),
\end{gather*}
and 
$$
\zeta_{+,i}(t_1,\cdot)=\sum\nolimits_{k=0}^\iy  t_1^k \al_{i,k}(\cdot)\;
\text{ and }\; 
\zeta_{-,i}(t_1,\cdot)=\sum\nolimits_{k=-\iy}^{-1}  t_1^k \al_{i,k}(\cdot).
$$

{\bf Step 2.} Decompose $F$ as
$$
F(\mbt)=F_+(\mbt)F_-(\mbt),
$$
where $F_+$ and $F_-$ have the same structure as $F$ while their last rows are replaced by
$$
[\zeta_{+,1},\zeta_{+,2},\ldots, \zeta_{+,(m-1)},1]\;\text{ and } \; 
[\zeta_{-,1},\zeta_{-,2},\ldots, \zeta_{-,(m-1)},f_m],
$$
respectively.

{\bf Step 3.} For a sufficiently large $n$, approximate the matrix function $F_-$ by $F_-^{\{n\}}$ of the same structure
as  \eqref{7F} but with the last row  replaced by
$$
[\zeta_{-,1}^{\{n\}},\zeta_{-,2}^{\{n\}},\ldots, \zeta_{-,(m-1)}^{\{n\}},f_m^{\{n\}}],
$$
where
$$
\zeta_{-,i}^{\{n\}}(t_1,\cdot)=\sum\nolimits_{k=-n}^{-1}  t_1^k \al_{i,k}(\cdot)\;\text{ and } \;
f_{m}^{\{n\}}(t_1,\cdot)=\sum\nolimits_{k=0}^{n}  t_1^k \gm_{k}(\cdot).
$$

{\bf Step 4.} For the matrix function $F_-^{\{n\}}$, construct the corresponding unitary matrix function $U_m^{\{n\}}$ of the form \eqref {67.6}, where $u_{ij}\in\calP_+^n(\bT^N_1)$ (see \eqref{06.04.01}), which satisfies \eqref {67.7} such that
$$
F_-^{\{n\}} U_m^{\{n\}}\in \big(\calP_+^n(\bT^N_1)\big)^{m\tm m}.
$$
This construction can be realized pointwise for a.e. $(t_2,t_3,\cdots,t_N)\in\bT^{N-1}$ by the corresponding 1-D theorem proved in \cite{IEEE} (see Theorem 1 and its proof therein).

The matrix function $U_m$ is obtained as a limit of $ U_m^{\{n\}}$, as $n\to\iy$. The convergent properties of the algorithm is analyzed in  Sections 7.

As it is done in \cite{IEEE}, for a simplicity of the presentation, we can assume that each $\xi_{ij}$ in \eqref {7.16} is approximated by 
\begin{equation}\label{5.76}
\xi_{ij}(\mbt)\approx  \xi_{ij}^{\{(i-j)n\}}(\mbt)=
\sum\nolimits_{k=-(i-j)n}^\iy t_1^k C_{1k}\{\xi_{ij}\}(t_2,\ldots,t_N),
\end{equation}
and then we get the approximation of \eqref{7.17}
\begin{equation}\label{7.19}
S_{+,1}^{\{n\}}(\mbt)=M_1^{\{n\}}(\mbt)\mbU_2^{\{n\}}(\mbt)
\mbU_3^{\{2n\}}(\mbt)\ldots \mbU_r^{\{(r-1)n\}}(\mbt)=:M_1^{\{n\}}(\mbt) \calU^{\{n\}}(\mbt),
\end{equation} 
where $M_1^{\{n\}}$ is obtained from \eqref{7.16} by making the approximations \eqref{5.76} and each $ \mbU_m^{\{(m-1)n\}}$ is constructed according to the above described procedures taking $(m-1)n$ instead of $n$ in Step 3.
We recall that
$\det \calU^{\{n\}}= 1$ a.e. and, therefore, $$S_{+,1}^{\{n\}}\in\bH^2(\bT^N_1)^{d\tm d}_O\; \text{ for all }\; \;n=1,2,\ldots$$

The convergence  
\begin{equation}\label{14.02.01}
\|S_{+,1}^{\{n\}}-S_{+,1}\|_{L^2(\bT^N)}\to 0 \;\text{ as } n\to \iy
\end{equation} 
 is proved in Section 7.

	\section{ Description of the multivariable MSF method}

In this section, we outline a general scheme of the proposed N-D MSF method which can be utilized as a computational algorithm. 

Recursively with respect to $l$, we factorize the matrix \eqref{S} as
\begin{equation}\label{S+n}
S(t)=S_{+,l}(t)S_{+,l}^*(t),
\end{equation} 
$l=1,2,\ldots,N$, where 
\begin{equation}\label{6.15}
S_{+,l}\in\bH^2(\bT^N_l)_O^{d\times d}.
\end{equation} 
By virtue of  Definitions \ref{def2.4}, factorization \eqref {HL-MSF} is achieved as soon as we reach $l = N$, i.e.
$$
S_+=S_{+,N}\,.
$$

The basic procedure is the factorization
\begin{equation}\label{6.3}
S(t)=S_{+,1}(t)S_{+,1}^*(t)
\end{equation}
described in the previous section
(for uniqueness purposes, we assume that $S_{+,1}(0,t_2,\ldots,t_N)$  is positive definite in  \eqref{6.3}, however, it does not play any role).

For a factorable $S\in L^1(\bT^N)^{d\times d}$, where $N\geq 2$, we denote the resulting  factor $S_{+,1}\in\bH^2(\bT^N_1)_O^{d\times d}$ by
\begin{equation}\label{6.22}
S_{+,1}=:\calS\calF_{N}[S].
\end{equation}
In addition, we define the matrix function 
\begin{equation}\label{6.6}
\hat{S}_{+,1}\in L^2(\bT^{N-1})^{d\times d}
\end{equation}
for a.e. $(t_2,\ldots,t_N)\in \bT^{N-1}$ by the equation (cf. Remark \ref{remDef2.1})
\begin{equation}\label{6.35}
\hat{S}_{+,1}(t_2,\ldots,t_N)=S_{+,1}(0,t_2,\ldots,t_N)=\frac{1}{2\pi}\int_{\bT}S_{+,1}(t,t_2,\ldots,t_N)\,dt.
\end{equation}
Relation \eqref{6.6} holds because of \eqref{6.4} and the Fubini theorem. We also have
\begin{gather}
\int_{\bT^N}\log|\det S_{+,1}|\,d\mu_1=
\frac{1}{(2\pi)^N}\int_{\bT^{N-1}}\left(\int_{\bT}\log|\det S_{+,1}(t_1,t_2,\ldots,t_N)|\,dt_1\right)\,dt_2\ldots dt_N \notag\\ \label{6.45}
=\frac{1}{(2\pi)^N}\int_{\bT^{N-1}}\log|\det S_{+,1}(0,t_2,\ldots,t_N)|\,dt_2\ldots dt_N=
\int_{\bT^{N-1}}\log|\det\hat{S}_{+,1}|\,d\mu_{N-1}
\end{gather}
(the second equality holds due to \eqref{outer2}) which, together with \eqref{HL-PW} and \eqref{6.3}, implies that
\begin{equation}\label{6.8}
\log |\det \hat{S}_{+,1}|\in L^1({\mathbb T^{N-1}}).
\end{equation}
Applying the operator $\calS\calF_{N}$ defined by \eqref{6.22}, we proceed with factorization \eqref{S+n} as follows:
The relations  \eqref{6.6} and \eqref{6.8} imply that 
\begin{equation}\label{4.65}
S_1:=\hat{S}_{+,1}\hat{S}_{+,1}^*\in L^1(\bT^{N-1})^{d\times d}
\end{equation}
is a factorable matrix function and the operator $\calS\calF_{(N-1)}$ can be applied to it. Consequently, $\calU_2:=\hat{S}_{+,1}^{-1}\calS\calF_{(N-1)}[S_1]$ is a unitary matrix function and if we  define
\begin{equation}\label{4.67}
S_{+,2} (t_1,t_2,\ldots,t_N):= S_{+,1} (t_1,t_2,\ldots,t_N)\,\calU_2(t_2,\ldots,t_N),
\end{equation}
we get
$$
S(t)=S_{+,2}(t)S_{+,2}^*(t),
\;\text{ and }\:
S_{+,2}\in\bH^2(\bT^N_2)_O^{d\times d}.
$$
Similarly, if the factorization
$$
S(t)=S_{+,l-1}(t)S_{+,l-1}^*(t),
$$
has already been constructed, 
where $ S_{+,l-1}\in\bH^2(\bT^N_{l-1})_O^{d\times d}$, then we define
\begin{equation}\label{6.87}
\hat{S}_{+,l-1}(t_l,\ldots,t_N)=
S_{+,l-1}(0,\ldots,0,t_l,\ldots,t_N)
=\int_{\bT^{l-1}}S_{+,l-1}(\cdot,t_l,\ldots,t_N)\,d\mu_{l-1}
\end{equation}
and
\begin{equation}\label{6.88}
S_{l-1}=\hat{S}_{+,l-1}\hat{S}_{+,l-1}^*\,,
\end{equation}
apply the operator $\calS\calF_{(N-l+1)}$ to $S_{l-1}$ to get a unitary matrix function
\begin{equation}\label{6.121}
\calU_l(t_l,\ldots,t_N):=\hat{S}_{+,l-1}^{-1}(t_l,\ldots,t_N)  \calS\calF_{(N-l+1)}[S_{l-1}](t_l,\ldots,t_N),
\end{equation}
and obtain 
\begin{equation}\label{6.117}
S_{+,l} (t_1,\ldots,t_N)= S_{+,l-1} (t_1,\ldots,t_N)\,\calU_l(t_l,\ldots,t_N)
\end{equation}
which satisfies \eqref{S+n} and \eqref{6.15}.

Summarizing, we get
\begin{equation}\label{12.02.01}
S_+=S_{+,1}\,\calU_2\, \calU_3\,\ldots \,\calU_N.
\end{equation}

In order to satisfy the uniqueness condition, we can take the spectral factor which is positive definite at the origin
$$
S_+(z)\big(S_+(\mbo)\big)^{-1}\sqrt{S_+(\mbo)(S_+(\mbo))^*}.
$$

Note also that 
\begin{equation}\label{6.89}
\hat{S}_{+,l}(t_{l+1},\ldots,t_N)= \hat{S}_{+,l-1}(0,t_{l+1},\ldots,t_N).
\end{equation}

In actual computations, the equations presented in this section are changed with approximations. For sufficiently large positive integers $n_1, n_2,\ldots,n_N$, we proceed as follows: 
First we approximate $S_{+,1}$ as it is described in Procedure 3 of the previous section 
\begin{equation}\label{6.82}
S_{+,1}\approx S_{+,1}^{\{n_1\}}=: \calS\calF_{N}^{\{n_1\}}[S].
\end{equation}
Then we compute $\hat{S}_{+,1}^{\{n_1\}}(t_2,\ldots,t_N)=S_{+,1}^{\{n_1\}}
(0,t_2,\ldots,t_N)$,  approximate 
$$
S_1\approx S_1^{\{n_1\}}=\hat{S}_{+,1}^{\{n_1\}}(\hat{S}_{+,1}^{\{n_1\}})^*,
$$
and obtain its approximate factor $\calS\calF_{(N-1)}^{\{n_2\}}[S_1^{\{n_1\}}]$ which gives an approximation of $S_{+,2}$ as
(cf. \eqref{4.67})
$$
S_{+,2}\approx S_{+,2}^{\{n_1n_2\}}=  S_{+,1}^{\{n_1\}}
(\hat{S}_{+,1}^{\{n_1\}})^{-1} \calS\calF_{(N-1)}^{\{n_2\}}[S_1^{\{n_1\}}]=:  S_{+,1}^{\{n_1\}}\calU_2^{\{n_1n_2\}}.
$$
Continuing in this manner, we obtain
$$
S_+=S_{+,N}\approx S_{+,N}^{\{n_1\ldots n_N\}}= S_{+,1}^{\{n_1\}}\calU_2^{\{n_1n_2\}}\ldots \calU_N^{\{n_1n_2\ldots n_N\}},
$$
where the following functions are defined recursively 
\begin{gather}\label{15.02.04}
\calU_l^{\{n_1n_2\ldots n_l\}}=(\hat{S}_{+,l-1}^{\{n_1\ldots n_{l-1}\}})^{-1}
\calS\calF_{(N-l+1)}^{\{n_l\}}[S_{l-1}^{\{n_1\ldots n_{l-1}\}}],\\
\label{6.12n}
\hat{S}_{+,l-1}^{\{n_1\ldots n_{l-1}\}}(t_l,\ldots,t_N)=S_{+,l-1}^{\{n_1\ldots n_{l-1}\}}(0,\ldots,0,t_l,\ldots,t_N)=\int_{T^{l-1}}
S_{+,l-1}^{\{n_1\ldots n_{l-1}\}}(\cdot,t_l,\ldots,t_N)\,d\mu_{l-1}\\
\label{6.13n}
 \;\;S_{l-1}^{\{n_1\ldots n_{l-1}\}}=
\hat{S}_{+,l-1}^{\{n_1\ldots n_{l-1}\}}\big(\hat{S}_{+,l-1}^{\{n_1\ldots n_{l-1}\}}\big)^* ,\\\label{6.185}
 S_{+,l}^{\{n_1n_2\ldots n_l\}}=
S_{+,l-1}^{\{n_1\ldots n_{l-1}\}}\, \calU_l^{\{n_1n_2\ldots n_l\}}.
\end{gather}

It is proved in the next section that 
\begin{equation}\label{6conv}
S_{+,N}^{\{n_1\ldots n_N\}}\to S_+  \;\text{ in } L^2(\bT^N)
\end{equation}
as all integers $n_1,n_2,\ldots,n_N\to \iy$

\section{Convergence properties of the algorithm}
In this section, we show that although the $N$-D MSF algorithm is described ``pointwise", it has convergence properties globally. Namely,  we prove convergences \eqref{14.02.01} and \eqref {6conv}. To this end, we need to introduce some additional definitions and prove auxiliary statements. 

 We say that a sequence $f_n\in L^2(\bT^N)$, $n=1,2,\ldots$, is convergent to $f\in L^p(\bT^N)$, $p\geq 1$, ``restricted to hyperplanes”, denoted
$$
f_n\rightarrowtail f \;\;\text{ in } L^p(\bT^N),
$$
if for each $l=2,3,\ldots,N$ and for a.e. $(t_l,t_{l+1},\ldots,t_N)\in\bT^{N-l+1}$,
$$
f_n(\cdot,t_l,\ldots,t_N)\to f(\cdot,t_l,\ldots,t_N)\;\;\text{ in } L^p(\bT^{l-1}) 
$$
as functions of variables $(t_1,t_2,\ldots,t_{l-1})$. It is assumed that $f_n\to f$ in $L^p(\bT^N)$ as well.

\begin{remark}
Simple examples show that, in general, $f_n\to f$ $\not\Longrightarrow $ $f_n\rightarrowtail f$ $($in the same $L^p(\bT^N)${}$)$. This happens because  convergence in norm does not imply  convergence almost everywhere. 
\end{remark}

For convenience of references, we prove some lemmas which easily follow from the Fubini theorem.

\begin{lemma}
	Let  $f\in L^2(\bT^N)$, and let 
	$$
	f_n(t_1,t_2,\ldots,t_N)=\sum\nolimits_{k=-n}^n t_1^k  C_{1k}\{f\}(t_2,\ldots,t_N).
	$$
	Then 
	\begin{equation}\label{14.02.03}
	f_n\rightarrowtail f \;\;\text{ in } L^2(\bT^N).
		\end{equation}
\end{lemma}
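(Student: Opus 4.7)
The plan is to unpack Definition of $\rightarrowtail$ into its two pieces and verify both by a single application of Parseval's identity in the first variable, combined with Fubini's theorem. Throughout, write $\mbt' = (t_2,\ldots,t_N)$ for the ``tail'' variables.

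First I would verify the global $L^2(\bT^N)$ convergence. Since $f\in L^2(\bT^N)$, Fubini gives $f(\cdot,\mbt')\in L^2(\bT)$ for a.e.\ $\mbt'$, and the one-dimensional Parseval identity in $t_1$ together with another application of Fubini yields
\[
\|f\|_{L^2(\bT^N)}^2=\sum_{k\in\bZ}\|C_{1k}\{f\}\|_{L^2(\bT^{N-1})}^2.
\]
Since $f-f_n$ has $t_1$-Fourier coefficients $C_{1k}\{f\}$ for $|k|>n$ and zero otherwise, the same identity yields $\|f-f_n\|_{L^2(\bT^N)}^2=\sum_{|k|>n}\|C_{1k}\{f\}\|_{L^2(\bT^{N-1})}^2\to 0$ as $n\to\infty$.

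Next, for each fixed $l\in\{2,\ldots,N\}$ and a.e.\ $(t_l,\ldots,t_N)\in\bT^{N-l+1}$, I would show the slice convergence $f_n(\cdot,t_l,\ldots,t_N)\to f(\cdot,t_l,\ldots,t_N)$ in $L^2(\bT^{l-1})$. Restrict attention to the full-measure set of $(t_l,\ldots,t_N)$ for which $f(\cdot,t_l,\ldots,t_N)\in L^2(\bT^{l-1})$; such a set exists by Fubini. Integrating first in $t_1$ and applying Parseval on $\bT$,
\[
\|f_n(\cdot,t_l,\ldots,t_N)-f(\cdot,t_l,\ldots,t_N)\|_{L^2(\bT^{l-1})}^2
=\int_{\bT^{l-2}}\sum_{|k|>n}|C_{1k}\{f\}(t_2,\ldots,t_N)|^2\,d\mu_{l-2}.
\]

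The integrand is a decreasing (in $n$) sequence of nonnegative measurable functions of $(t_2,\ldots,t_{l-1})$, converging pointwise to $0$ wherever $f(\cdot,t_2,\ldots,t_N)\in L^2(\bT)$. By Parseval again, the full series $\sum_{k\in\bZ}|C_{1k}\{f\}|^2$ equals $\int_\bT|f(t_1,t_2,\ldots,t_N)|^2\,dt_1$, which is integrable in $(t_2,\ldots,t_{l-1})\in\bT^{l-2}$ since $f(\cdot,t_l,\ldots,t_N)\in L^2(\bT^{l-1})$ (one more Fubini). The monotone convergence theorem applied to the decreasing tails therefore forces the integral to tend to $0$, which gives the required slice convergence.

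Since both clauses in the definition of $\rightarrowtail$ have been verified, \eqref{14.02.03} follows. No step presents a genuine obstacle; the only care needed is to select the full-measure sets of $(t_l,\ldots,t_N)$ on which Parseval is actually valid on the slice, which is handled automatically by Fubini applied to $f\in L^2(\bT^N)$.
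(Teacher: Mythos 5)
Your proof is correct and follows essentially the same route as the paper's: pointwise Parseval in the first variable on each slice, with the tail sums dominated by the integrable function $\int_{\bT}|f(t_1,\cdot)|^2\,dt_1$ so that the integral over $(t_2,\ldots,t_{l-1})$ tends to zero. The only cosmetic difference is that you invoke monotone convergence on the decreasing tails where the paper invokes domination, and you spell out the global $L^2(\bT^N)$ convergence explicitly; both are the same argument.
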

\begin{proof}
	For a.a. $(t_l,\ldots,t_N)\in \bT^{N-l+1}$, we have 
$$
f_{t_l\ldots t_N}:=f(\cdot,t_l,\ldots,t_N)\in L^2(\bT^{l-1}) 
$$
and
$$
C_{1k}\{f\}(t_2,\ldots,t_N)=C_{1k}\{f_{t_l\ldots t_N}\}(t_2,\ldots,t_{l-1})
\;\text{ for  a.a. }\; (t_2,\ldots,t_{l-1})\in\bT^{l-2}.
$$
Therefore, for a.a. $(t_l,\ldots,t_N)\in \bT^{N-l+1}$,
\begin{gather*}
\int_{\bT^{l-1}}\left| f_{t_l\ldots t_N}(t_1,\ldots,t_{l-1})-
\sum\nolimits_{k=-n}^n  t_1^k  C_{1k}\{f\}(t_2,\ldots,t_N)\right|^2\,dt_1\ldots dt_{l-1}=\\
\int_{\bT^{l-2}}\left(\int_{\bT}\left| f_{t_l\ldots t_N}(t_1,\ldots,t_{l-1})-
\sum\nolimits_{k=-n}^n  t_1^k  C_{1k}\{f_{t_l\ldots t_N}\}(t_2,\ldots,t_{l-1})\right|^2\,dt_1\right)\,dt_2\ldots dt_{l-1}\to 0
\end{gather*}
since the second integral in the last expression converges to $0$ for a.a. $ (t_2,\ldots,t_{l-1})\in\bT^{l-2}$ (due to pointwise application of the Parseval’s identity) and it is majorized by
$$
\int_{\bT}\left| f_{t_l\ldots t_N}(t_1,\ldots,t_{l-1})\right|^2\,dt_1\in 
L^2(\bT^{l-2}).
$$
Hence the convergence ``restricted to hyperplanes” \eqref{14.02.03} follows.
	\end{proof}

For a function $f\in L^1(\bT^N)$ and $1\leq j\leq N-1$, 
slightly abusing the notation (cf. \eqref {2.81}),
let $\hat{f}\in L^1(\bT^{N-j})$ be the function defined by
$$
\hat{f}(t_{j+1},\ldots,t_N)=\int_{\bT^j} f(\cdot,t_{j+1},\ldots,t_N)\,d\mu_j.
$$

\begin{lemma}  Let 
	$$
	f_n\rightarrowtail f \;\;\text{ in } L^p(\bT^N),
	$$
where $p\geq 1$.	Then, for each $j=1,2,\ldots,N-1$,
	\begin{equation}\label{14.02.04}
	\hat{f}_n\rightarrowtail \hat{f} \;\;\text{ in } L^p(\bT^{N-j}).
		\end{equation}
\end{lemma}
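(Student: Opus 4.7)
The plan is to derive both ingredients of $\hat{f}_n\rightarrowtail\hat{f}$ --- the global $L^p(\bT^{N-j})$ convergence and the convergence on each hyperplane of $\bT^{N-j}$ --- from a single Jensen/Fubini estimate applied to the identity
$$
(\hat{f}_n - \hat{f})(t_{j+1},\ldots,t_N) = \int_{\bT^j}(f_n-f)(t_1,\ldots,t_N)\,d\mu_j,
$$
which expresses linearity of the averaging operator $g\mapsto\hat{g}$. Pointwise, Jensen's inequality in the form $(\int_{\bT^j}|g|\,d\mu_j)^p\leq \int_{\bT^j}|g|^p\,d\mu_j$ (used already in the proof of Proposition \ref{H1}) yields the bound
$$
|\hat{f}_n - \hat{f}|^p(t_{j+1},\ldots,t_N) \leq \int_{\bT^j}|f_n-f|^p\,d\mu_j,
$$
so every desired conclusion will follow by integrating this estimate over a suitable set of variables and invoking Fubini.

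For the global convergence that the $\rightarrowtail$ definition also requires, I would integrate the pointwise bound over all of $\bT^{N-j}$ and apply Fubini, obtaining
$$
\|\hat{f}_n - \hat{f}\|_{L^p(\bT^{N-j})}^p \leq \|f_n - f\|_{L^p(\bT^N)}^p,
$$
whose right-hand side tends to $0$ by the $L^p(\bT^N)$ convergence included in the assumption $f_n \rightarrowtail f$. For the hyperplane restriction at a fixed level $l\in\{2,\ldots,N-j\}$, I would set $m := j+l$: viewing $\hat{f}$ as a function of $(t_{j+1},\ldots,t_N)$, fixing its ``last $N-j-l+1$'' coordinates is the same as fixing $(t_m,\ldots,t_N)$. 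Integrating the pointwise Jensen bound only in $(t_{j+1},\ldots,t_{m-1})$ and applying Fubini yields
$$
\int_{\bT^{m-j-1}}|\hat{f}_n - \hat{f}|^p\,dt_{j+1}\cdots dt_{m-1} \leq \int_{\bT^{m-1}}|f_n-f|^p\,dt_1\cdots dt_{m-1},
$$
with both sides viewed as functions of the fixed tail $(t_m,\ldots,t_N)\in\bT^{N-m+1}$. The hypothesis $f_n\rightarrowtail f$ invoked at level $l'=m\in\{j+2,\ldots,N\}$ says precisely that, for a.e. such tail, the right-hand side tends to $0$, which is the required conclusion.

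I do not anticipate any substantive obstacle: the whole argument reduces to two routine applications of Jensen's inequality and Fubini's theorem. The only delicate point is the index bookkeeping, namely matching each level $l$ of hyperplane for $\hat{f}$ on $\bT^{N-j}$ with the corresponding level $l'=l+j$ supplied by the hypothesis on $\bT^N$; once this correspondence is written down, the estimate is automatic.
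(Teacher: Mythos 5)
Your proposal is correct and follows essentially the same route as the paper: both proofs rest on the pointwise Jensen bound $\bigl(\int_{\bT^j}|f_n-f|\,d\mu_j\bigr)^p\le\int_{\bT^j}|f_n-f|^p\,d\mu_j$ followed by Fubini, reducing the convergence of $\hat{f}_n-\hat{f}$ on each hyperplane of $\bT^{N-j}$ to the hypothesis at the shifted level $m=j+l$ of $\bT^N$ (the paper parametrizes directly by $j+1<l\le N$, which is the same bookkeeping). Your explicit handling of the global $L^p(\bT^{N-j})$ convergence is a small addition the paper leaves implicit, but it is the same estimate integrated over all variables.
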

\begin{proof} Let $j+1<l\leq N$. For a.a. $(t_l,t_{l+1},\ldots,t_N)\in\bT^{N-l+1}$, we have
	\begin{gather*}
	\big\|\hat{f}_n(\cdot,t_{l},\ldots,t_N)-\hat{f}(\cdot,t_{l},\ldots,t_N)\big\|^p_{L^p(\bT^{l-j-1})}\\
=\left(\frac{1}{2\pi}\right)^{l-j-1}	\int_{\bT^{l-j-1}} \left| \int_{\bT^j} f_n(\cdot,t_{j+1},\ldots,t_N)\,d\mu_j-
 \int_{\bT^j} f(\cdot,t_{j+1},\ldots,t_N)\,d\mu_j
\right|^p\,dt_{j+1}\cdots\,dt_{l-1} \\
\leq \left(\frac{1}{2\pi}\right)^{l-j-1}
	\int_{\bT^{l-j-1}} \left( \int_{\bT^j}\big| f_n(\cdot,t_{j+1},\ldots,t_N)- f(\cdot,t_{j+1},\ldots,t_N)\big|^p\,d\mu_j\right) \,dt_{j+1}\cdots\,dt_{l-1}\\	=\big\|f_n(\cdot,t_{l},\ldots,t_N)-f(\cdot,t_{l},\ldots,t_N)\big\|_{L^1(\bT^{l-1})}^p\to 0\,,
	\end{gather*}
	which implies \eqref{14.02.04}.
\end{proof}

The basic step in the proof of the convergence is the following 
\begin{theorem}
		Let $S^{\{n\}}$, $n=1,2,\ldots,$ and $S$ satisfy \eqref{S},
	\begin{equation}\label{21.2.1}
	S^{\{n\}}\rightarrowtail S  \;\text{ in }\; L^1(\bT^N)
	\end{equation}
	and, for a.a. $(t_2,t_3,\ldots,t_N)\in\bT^{N-1}$,
	$$
	\int_{\bT}\log\det S^{\{n\}}(t_1,t_2,\ldots,t_N)\,dt_1\to 
	\int_{\bT}\log\det S(t_1,t_2,\ldots,t_N)\,dt_1.
	$$
	Suppose 
	\begin{equation}\label{21.2.2}
	S^{\{n\}}(\mbt)=S^{\{n\}}_{+,1}(\mbt)\left(S^{\{n\}}_{+,1}(\mbt)\right)^*
	\end{equation}
	is the factorization of $S^{\{n\}}$ defined according to Section 5. Then
	\begin{equation}\label{21.2.3}
	S^{\{n\}}_{+,1} \rightarrowtail S_{+,1}  \;\text{ in }\; L^2(\bT^N).
	\end{equation}
\end{theorem}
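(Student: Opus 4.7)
The strategy is to reduce \eqref{21.2.3} to pointwise application of the one-dimensional stability Theorem~\ref{th.2.1} along slices in the first variable $t_1$, and then to upgrade the slice-wise $L^2(\bT)$ convergence to restricted $L^2$ convergence on every hyperplane $\bT^{l-1}$ using the majorized-convergence Proposition at the end of Section~2.

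\textbf{Step 1: pointwise one-dimensional stability.}
From $S^{\{n\}}\rightarrowtail S$ in $L^1(\bT^N)$, taking $l=2$ in the definition of ``restricted convergence'' gives $S^{\{n\}}(\cdot,t_2,\ldots,t_N)\to S(\cdot,t_2,\ldots,t_N)$ in $L^1(\bT)$ for a.e.\ $(t_2,\ldots,t_N)\in\bT^{N-1}$. Combined with the assumed convergence of $\int_\bT\log\det S^{\{n\}}(t_1,\cdot)\,dt_1$, this supplies the hypotheses of Theorem~\ref{th.2.1} on a.e.\ slice. By the construction of Section~5 (and the definition of $\bH^2(\bT^N_1)_O^{d\times d}$), $S^{\{n\}}_{+,1}(\cdot,t_2,\ldots,t_N)$ is the 1-D Wiener--Masani outer spectral factor of $S^{\{n\}}(\cdot,t_2,\ldots,t_N)$, with a normalization consistent across $n$, and similarly for $S_{+,1}$. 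Theorem~\ref{th.2.1} therefore yields
\begin{equation*}
\bigl\|S^{\{n\}}_{+,1}(\cdot,t_2,\ldots,t_N)-S_{+,1}(\cdot,t_2,\ldots,t_N)\bigr\|_{L^2(\bT)}\to 0\quad\text{for a.e.}\;(t_2,\ldots,t_N)\in\bT^{N-1},
\end{equation*}
which already handles the case $l=2$ of \eqref{21.2.3}.

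\textbf{Step 2: upgrade to hyperplanes via majorization.}
Fix $l\in\{3,\ldots,N\}$ and a.e.\ $(t_l,\ldots,t_N)\in\bT^{N-l+1}$, and define
\begin{equation*}
G_n(t_2,\ldots,t_{l-1}):=\int_\bT\bigl|S^{\{n\}}_{+,1}(t_1,t_2,\ldots,t_N)-S_{+,1}(t_1,t_2,\ldots,t_N)\bigr|^2\,dt_1.
\end{equation*}
Step~1 together with Fubini gives $G_n\to 0$ a.e.\ on $\bT^{l-2}$, hence in measure. The identity $\int_\bT\bigl|S^{\{n\}}_{+,1}(\cdot,t_2,\ldots,t_N)\bigr|^2 dt_1=\operatorname{tr}\hat S^{\{n\}}(t_2,\ldots,t_N)$, immediate from \eqref{21.2.2} and $S^{\{n\}}_{+,1}(S^{\{n\}}_{+,1})^*=S^{\{n\}}$, yields the bound
\begin{equation*}
G_n(t_2,\ldots,t_{l-1})\leq 2\operatorname{tr}\!\bigl(\hat S^{\{n\}}+\hat S\bigr)(t_2,\ldots,t_N)=:h_n(t_2,\ldots,t_{l-1}).
\end{equation*}
By Lemma~7.2 applied to $S^{\{n\}}\rightarrowtail S$, we get $\hat S^{\{n\}}\rightarrowtail\hat S$ in $L^1(\bT^{N-1})$; restricting to the fixed $(t_l,\ldots,t_N)$ gives $\|h_n-4\operatorname{tr}\hat S\|_{L^1(\bT^{l-2})}\to 0$. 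The Proposition at the end of Section~2, applied with $p=1$, $f_n:=G_n\geq 0$, $f_0\equiv 0$, then delivers $\|G_n\|_{L^1(\bT^{l-2})}\to 0$, which is exactly
\begin{equation*}
\bigl\|S^{\{n\}}_{+,1}(\cdot,t_l,\ldots,t_N)-S_{+,1}(\cdot,t_l,\ldots,t_N)\bigr\|_{L^2(\bT^{l-1})}\to 0.
\end{equation*}
Running the same majorization argument on all of $\bT^{N-1}$ (using $\|\hat S^{\{n\}}-\hat S\|_{L^1(\bT^{N-1})}\leq\|S^{\{n\}}-S\|_{L^1(\bT^N)}\to 0$) delivers the global $\|S^{\{n\}}_{+,1}-S_{+,1}\|_{L^2(\bT^N)}\to 0$ that is the remaining part of the definition of $\rightarrowtail$.

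\textbf{Anticipated obstacle.}
No deep analytic difficulty is expected: the genuinely nontrivial input is entirely encapsulated in the 1-D stability Theorem~\ref{th.2.1}. The care required is purely bookkeeping of nested ``for a.e.'' quantifiers: one must verify via Fubini that the a.e.\ conclusion of Step~1, valid a.e.\ on $\bT^{N-1}$, re-reads as an a.e.\ conclusion on $\bT^{l-2}$ for a.e.\ fixed $(t_l,\ldots,t_N)$, and that the restricted $L^1$ convergence of the dominating function $\hat S^{\{n\}}$ survives this slicing --- both routine applications of Fubini and Lemma~7.2.
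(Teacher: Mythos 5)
Your proof is correct and follows essentially the same route as the paper: slice-wise application of the one-dimensional stability Theorem~\ref{th.2.1}, followed by an upgrade to convergence on each hyperplane via the majorization Proposition 2.2, with the domination supplied by the factorization identity \eqref{21.2.2}. The only cosmetic difference is that the paper applies Proposition 2.2 with $p=2$ directly to the entries of $S^{\{n\}}_{+,1}$ on $\bT^{l-1}$ (convergent in measure there, and dominated by the diagonal entries of $S^{\{n\}}$, which converge in $L^1(\bT^{l-1})$ by the restricted-convergence hypothesis \eqref{21.2.1}), whereas you integrate out $t_1$ first and apply it with $p=1$ to $G_n$ on $\bT^{l-2}$ via Lemma 7.2.
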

\begin{proof}
	By virtue of 1-dimensional Theorem \ref {th.2.1}, for a.a. $(t_2,t_3,\ldots,t_N)\in\bT^{N-1}$, we have
	$$
	S^{\{n\}}_{+,1}(\cdot, t_2,\ldots,t_N)\to
	S_{+,1}(\cdot, t_2,\ldots,t_N)\;\text{ in } L^2(\bT).
	$$
	This implies  the convergence in measure for each $l=2,\ldots,N$ and  a.a. $(t_l,\ldots,t_N)\in\bT^{N-l+1}$:
	$$
	S^{\{n\}}_{+,1}(\cdot, t_l,\ldots,t_N) \rightrightarrows
	S_{+,1}(\cdot, t_l,\ldots,t_N)\;\text{ on } \bT^{l-1}.
	$$
	Equation \eqref{21.2.2} guarantees that the squares of absolute values of the entries of matrix functions $S^{\{n\}}_{+,1}(\cdot, t_l,\ldots,t_N)$ are bounded by diagonal entries of $S^{\{n\}}(\cdot, t_l,\ldots,t_N)$ which are convergent in $L^1(\bT^{l-1})$ by virtue of the definition of the convergence (``restricted to hyperplanes”) in \eqref {21.2.1}.	
	Therefore, Proposition 2.2 implies that 
	$$
	S^{\{n\}}_{+,1}(\cdot, t_l,\ldots,t_N) \to
	S_{+,1}(\cdot, t_l,\ldots,t_N)\;\text{ in } L^1(\bT^{l-1}).
	$$
	Hence \eqref{21.2.3} holds.
\end{proof}

Theorem 7.1 implies the convergence 
	\begin{equation}\label{15.02.00}
S^{\{n_1\}}_{+,1} \rightarrowtail S_{+,1}  \;\text{ in }\; L^2(\bT^N)
\end{equation}
for $S^{\{n_1\}}_{+,1}$  defined by \eqref {7.19},
which in particular contains \eqref{14.02.01}. Indeed, $S^{\{n_1\}}_{+,1}$  is a spectral factor of $M_1^{\{n_1\}}( M_1^{\{n_1\}})^*$ which can be taken in the role of $S^{\{n\}}$ in Theorem 7.1. By virtue of Lemma 7.1 and H\"{o}lder's inequality, we have
$$
M_1^{\{n_1\}}( M_1^{\{n_1\}})^* \rightarrowtail M_1M_1^*=S \; \text{ in }\; L^1(\bT),
$$
and determinants are also equal for each $n_1$
$$
\det \left(S^{\{n_1\}}_{+,1}(S^{\{n_1\}}_{+,1})^* \right)=\det \left( M_1M_1^*\right)=\det S
$$
because of the structure of matrices $S^{\{n_1\}}_{+,1}$ and $M_1^{\{n_1\}}$, see \eqref{7.19}. Hence, the hypothesis of Theorem 7.1 are satisfied and \eqref{15.02.00} holds.

The important observation is that as the factorization proceeds the determinants of the obtained matrices remain unchanged, namely
	\begin{equation}\label{15.02.01}
\det S^{\{n_1\ldots n_l\}}_l=\det S_l\;\text{ for each } l=2,\ldots,N.
\end{equation}
 This  can be justified recursively by using the definitions \eqref {6.22}, \eqref {6.82}, \eqref {6.87}, \eqref{6.88}, \eqref{6.12n},  \eqref{6.13n}, and the property \eqref{6.89}:
\begin{gather*}
\det S^{\{n_1\ldots n_{l-1}\}}_{l-1}=\det S_{l-1}\Rightarrow
\left| \det \left( \calS\calF^{\{n_l\}}_{(N-l+1)}[ S^{\{n_1\ldots n_{l-1}\}}_{l-1}]\right)\right|=\left| \det \left( \calS\calF_{(N-l+1)}[ S_{l-1}]\right)\right| 
\\ 
\text{(see Remark \ref{rem2.3} and also \eqref{6.45})} \Rightarrow
\left| \det \hat{S}^{\{n_1\ldots n_{l}\}}_{+,l}\right|=\left|\det \hat{S}_{+,l}\right| \Rightarrow 
  \det {S}^{\{n_1\ldots n_{l}\}}_{l}=\det {S}_{l}.
\end{gather*}

We are now ready  to show 
	\begin{equation}\label{15.02.02}
S_{+,N}^{\{n_1\ldots n_N\}} \rightarrowtail S_{+,N}=S_+  \;\text{ in } L^2(\bT^N),
\end{equation}
and consequently \eqref {6conv}, by using again the recursive steps. The convergence \eqref{15.02.00} can be used as a starting point. Now assume that
$$
S_{+,l-1}^{\{n_1\ldots n_{l-1}\}} \rightarrowtail S_{+,l-1}  \;\text{ in } L^2(\bT^N).
$$
This implies that (see \eqref{6.87} and \eqref{6.12n})
$$
\hat{S}_{+,l-1}^{\{n_1\ldots n_{l-1}\}} \rightarrowtail \hat{S}_{+,l-1}  \;\text{ in } L^2(\bT^{N-l+1})
$$
by virtue of Lemma 7.2, which in turn implies that (see \eqref{6.88} and \eqref{6.13n})
$$
S_{l-1}^{\{n_1\ldots n_{l-1}\}} \rightarrowtail S_{l-1} \;\text{ in } L^1(\bT^{N-l+1})
$$
because of  H\"{o}lder's inequality. Combining this with \eqref{15.02.01}, we obtain
$$
 \calS\calF^{\{n_l\}}_{(N-l+1)}[ S^{\{n_1\ldots n_{l-1}\}}_{l-1}]
  \rightarrowtail \calS\calF_{(N-l+1)}[ S_{l-1}]  \;\text{ in } L^2(\bT^{N-l+1}).
$$
by virtue of Theorem 7.1. Consequently (see \eqref{6.121} and \eqref{15.02.04}), $\calU_l^{\{n_1n_2\ldots n_l\}}\rightrightarrows \calU_l$  and applying Corollary 2.2 we get (see \eqref{6.117} and \eqref{6.185})
$$
{S}_{+,l}^{\{n_1\ldots n_{l}\}}={S}_{+,l-1}^{\{n_1\ldots n_{l-1}\}} \calU_l^{\{n_1n_2\ldots n_l\}} \rightarrowtail {S}_{+,l}  \;\text{ in } L^2(\bT^{N}).
$$

\section{Numerical simulations}

The computer code for numerical testing of the proposed algorithm was written in \mbox{MATLAB}. A simple outer  type  polynomial matrix 
$$
A=\begin{pmatrix}
4+y-xy^{-1}+x+2xy& 1+2y+x+xy\\ 1+y+2xy^{-1}+2x+2y& 5+y+xy^{-1}-x+xy
\end{pmatrix}
$$
(we relabel  $x=t_1$ and $y=t_2$; the integer coefficients are also chosen for notational simplicity) was designed, which is positive definite at the origin,  and $S$ was constructed as
\begin{equation}\label{24.8.1}
S=AA^*.
\end{equation} 
Then \eqref{24.8.1} is an exact spectral factorization of 
$$
S=\begin{pmatrix} S_{11}  & S_{12}\\ S_{21}&  S_{22}
\end{pmatrix},
$$
where
$
S_{11}=\newline
9x^{-1}y^{-1}     +9 x^{-1}   -x^{-1}y    -x^{-1}y^2
-2 y^{-2}  +   8 y^{-1}  +   30   +   8 y   -2 y^2
-xy^{-2}    -xy^{-1}   +  9 x  +  9 xy$;

$
S_{12}= \newline
9x^{-1}y^{-1}     +11 x^{-1}   +9x^{-1}y    +4x^{-1}y^2
-2 y^{-2}  +   6y^{-1}  +   16   +   17y   +5y^2
-xy^{-2}    +xy^{-1}   +  9 x  +  7 xy$;

$
S_{21}=\newline
7x^{-1}y^{-1}     +9x^{-1}   +x^{-1}y    -x^{-1}y^2
+5 y^{-2}  +   17y^{-1}  +   16   +   6y  -2y^2
+4xy^{-2}    +9xy^{-1}   + 11 x  +  9 xy $;

$
S_{22}     =
7x^{-1}y^{-1}   +8x^{-1}y    +3x^{-1}y^2
+5 y^{-2}  +   12y^{-1}  +   43  +   12y  +5y^2
+3xy^{-2}    +8xy^{-1}   + 7xy;       
$\newline

In less than 2 seconds, a complete 16 digit accuracy of MatLab double precision has been achieved on a computer with the following characteristics: Intel(R) Core(TM) i7 8650U CPU, 1.90 GHz, RAM 16.00 Gb.

As was mentioned in the Introduction, there exist several  multivariable MSF methods available  in the literature. However, none of them provide numerical examples of factorized multivariable matrices. Therefore, we were unable to carry out a comparative analysis of the proposed method based on numerical simulations.

\section{Appendix: Application to Granger causality}

Granger causality \cite{Granger} has emerged in recent years as one of the leading statistical techniques in neuroscience for inferring directions of neural interactions and information flow in the brain from collected multidimensional data. In this Appendix, for illustrative purposes, we demonstrate the application of spectral factorization in Granger causality. The basic idea can be traced back to Wiener \cite{Wiener}.

 For two jointly stationary processes $\ldots, X_{-1},X_0,X_1,X_2\ldots$ and $\ldots, Y_{-1},Y_0,Y_1,Y_2\ldots$, let
$$
X_{n+1}=\sum\nolimits_{k=0}^\iy a_k X_{n-k}+\ep_n
$$
be the autoregressive representation of the process $X$, and let
$$
X_{n+1}=\sum\nolimits_{k=0}^\iy b_k X_{n-k}+\sum\nolimits_{k=0}^\iy c_k Y_{n-k}+\eta_n
$$
be its joint representation, where $\ep$ and $\eta$ are corresponding noise terms. It is assumed that $X_k, Y_k$ belong to a Hilbert space $\calH$ and
\begin{equation}\label{GC3}
\langle Z^i_n, Z^j_{n+k}\rangle=\int_{\bT} t^{-k}\,d\nu_{ij},
\end{equation}
where $i,j=1,2$; $Z^1=X$, $Z^2=Y$; and $\nu=(\nu_{ij})$ is a matrix spectral measure defined on $\bT$. 
Therefore, the value $\sigma_1=\|\ep_n\|$ measures the accuracy of the autoregressive prediction of $X_n$ based on its previous values, whereas the value $\Sigma_1=\|\eta_n\|$ represents the
accuracy of predicting the same value  $X_n$ based on the previous values of both $X$ and $Y$.
According to Wiener \cite{Wiener} and Granger \cite{Granger}, if $\Sigma_1$ is less than $\sigma_1$ in some suitable statistical sense, then $Y$ is said to have a causal 
influence on $X$. This causal influence is quantified by (see \cite{Dhamala})
\begin{equation}\label{GC5}
F_{Y\to X}= \ln\frac{\sigma_1}{\Sigma_1}.
\end{equation}	

Recently the concept of multi-step Granger causality has also been introduced \cite{MGC} with
\begin{equation}\label{GC6}
F^L_{Y\to X}= \ln\frac{\sigma_L}{\Sigma_L},
\end{equation}	
where 
\begin{equation}\label{GC7}
\sigma_L=\inf_{a_k} \left\|X_{n+L}-\sum\nolimits_{k=0}^\iy a_k X_{n-k}\right\|
\end{equation}
and
\begin{equation*}\label{GC8}
\Sigma_L=\inf_{b_k, c_k} \left\|X_{n+L}-\sum\nolimits_{k=0}^\iy b_k X_{n-k}
-\sum\nolimits_{k=0}^\iy c_k Y_{n-k}\right\|.
\end{equation*}  	
As usually is the case in applications, let us assume below  that the stationary processes are regular and non-deterministic. Therefore, the spectral measure is absolutely continuous 
$$
\nu(t)=S(t)\,dt
$$
and the matrix function $S$ satisfies the factorability condition \eqref{PW} (see \cite{Roz}).

By virtue of \eqref{GC3}, the process $\ldots, X_{-1},X_0,X_1,X_2\ldots$ is unitary equivalent to $\{t^n\}_{n\in\bZ}$ in the Hilbert space $L^2(d\nu_{11})$. Therefore, if $\nu_{11}(t)=f(t)\,dt$ and $f(t)=f_+(t)\ol {f_+(t)}$ is the spectral factorization of $f$, then \eqref{GC7} can be expressed by
\begin{equation}\label{GC77}
\sigma_L=\sqrt{\sum\nolimits_{k=0}^{L-1}|C_k\{f_+\}|^2}=\big\|\bP_{L-1}[f_+]\big\|,
\end{equation} 
where $\bP_L$ stands for the projection operator acting as $\bP_L: \sum\nolimits_{k=0}^{\iy} c_kt^k\to \sum\nolimits_{k=0}^{L} c_kt^k$.  
Indeed, 
\begin{gather*}
\sigma_L^2=\inf_{a_k}
\big\|x_{n+L}-\sum\nolimits_{k=0}^\iy a_k x_{n-k}\big\|^2=
\inf_{a_k}\frac1{2\pi}\int_{T} \left|t^{-L}-\sum\nolimits_{k=0}^\iy a_kt^k\right|^2 f(t)\,dt\\
=	\inf_{a_k}\frac1{2\pi}\int_{T} \left(t^{-L}-\sum\nolimits_{k=0}^\iy a_kt^k\right)f_+(t)
\ol{\left(t^{-L}-\sum\nolimits_{k=0}^\iy a_kt^k\right)f_+(t)}\,dt\\
=\inf_{a_k}\left\|t^{-L} \bP_{L-1}[f_+]+ \sum\nolimits_{k=0}^{\iy}C_{k+L}\{f_+\}t^k-\sum\nolimits_{k=0}^\iy a_kt^k f_+(t) \right\|^2_2\\
=\big\|t^{-L}\bP_{L-1}[f_+]\big\|^2_2+\inf_{a_k}\left\| \sum\nolimits_{k=0}^{\iy}C_{k+L}\{f_+\}t^k-\sum\nolimits_{k=0}^\iy a_kt^k f_+(t) \right\|^2_2=\big\|\bP_{L-1}[f_+]\big\|^2_2,
\end{gather*}
 The infimum in the previous line  is equal to zero because of the Beurling theorem (see, e.g. \cite{Koo}).

This reasoning can be extended to the matrix case, and it can be proved that 
\begin{equation}\label{GC10}
\Sigma_L=\sqrt{\sum\nolimits_{k=0}^{L-1} \big(|C_k\{S_{11}^+\}|^2+|C_k\{S_{12}^+\}|^2\big)},
\end{equation}  
where $S^+_{11}$ and $S^+_{12}$ are the corresponding entries in  the spectral factor of $S_+$. Indeed, using the notation $\|(a,b)\|_2^2=\|a\|_2^2+\|b\|_2^2$ for $a$ and $b$ from $L^2(\bT)$ and simirarily to the scalar case, we get in the matrix case (cf. \cite[7.9]{Wie57}):
\begin{gather*}
\Sigma_L^2=\inf_{N,\al,\bt}
\big\|x_{n+L}-\sum\nolimits_{k=0}^N \al_k x_{n-k}-\sum\nolimits_{k=0}^N \bt_k y_{n-k}\big\|^2.\\
=\inf_{N,\al_k,\bt_k}
\frac1{2\pi}\int_{\bT}\Big((t^{-L},0)-\sum\nolimits_{k=0}^N (\al_k,\bt_k)t^k\Big)S(t)
\Big((t^L,0)^T-\sum\nolimits_{k=0}^N (\al_k,\bt_k)^*t^{-k}\Big)\,dt=\\ \inf_{N,\al_k,\bt_k}
\frac1{2\pi}\int_{\bT}\Big((t^{-L},0)-\sum_{k=0}^N (\al_k,\bt_k)t^k\Big)S^+(t)\big(S^+(t)\big)^*
\Big((t^L,0)^T-\sum_{k=0}^N (\al_k,\bt_k)^*t^{-k}\Big)\,dt=\\ \inf_{N,\al_k,\bt_k}
\left\|t^{-L}\big(\bP_{L-1}[S^+_{11}],\bP_{L-1}[S^+_{12}]\big)+
\sum_{k=0}^{\iy}\Big(C_{k+L}\{S_{11}^+\},
C_{k+L}\{S_{12}^+\}\Big)t^k
-\sum_{k=0}^N (\al_k,\bt_k)t^kS^+(t)\right\|^2_2\\
=\left\|\big(\bP_{L-1}[S^+_{11}],\bP_{L-1}[S^+_{12}]\big)\right\|^2_2+
\inf_{N,\al_k,\bt_k}
\left\|\sum_{k=0}^{\iy}\Big(C_{k+L}\{S_{11}^+\},
C_{k+L}\{S_{12}^+\}\Big)t^k
-\sum_{k=0}^N (\al_k,\bt_k)t^kS^+(t)\right\|^2_2\\
=\|\bP_{L-1}[S^+_{11}]\|^2_2+\|\bP_{L-1}[S^+_{12}]\|^2_2.
\end{gather*}
Since $S^+$ is an outer analytic matrix function,  the last infimum is $0$  by the vector generalization of Beurling theorem  (see, e.g. \cite{EL10}).
Thus \eqref{GC10} holds.

Due to \eqref{GC6}, \eqref{GC77}, and \eqref{GC10}, we have
\begin{equation}\label{Ab44}
F^L_{Y\to X}=\ln\frac{\sum\nolimits_{k=0}^{L-1}|C_k\{f_+\}|^2}{\sum\nolimits_{k=0}^{L-1} \big(|C_k\{S_{11}^+\}|^2+|C_k\{S_{12}^+\}|^2\big)}.
\end{equation}
Consequently, the spectral factorization plays a crucial role in estimating the Granger causality.

The multivariable matrix spectral factorization algorithm proposed in this paper provides perspective to develop  the multivariate Granger causality theory for the situation where the collected data depends on more than one parameter (e.g. when we have a spatio-temporal dependence of random variables on indices). For simplicity of notation, we assume that the number of these parameters is two  and we have jointly stationary processes $X_{nm}$ and $Y_{nm}$.  Then there exists a spectral measure $\nu=(\nu_{ij})$ defined on $\bT^2$ such that
\begin{equation}\label{GC3}
\langle Z^i_{mn}, Z^j_{m+k,n+l}\rangle=C_{kl}=\int_{\bT} t_1^{-k}t_2^{-l}\,d\nu_{ij}(t_1,t_2),
\end{equation}
where $Z^1=X$ and $Z^2=Y$. Again we assume that $\nu$ is absolutely continuous: $\nu(\mbt)=S(\mbt)\,d\mbt$, and $S$ satisfies the factorability condition \eqref {HL-PW}. The half-plane $H_2\sbs\bZ^2$ imposes the ``temporal order” in $\bZ^2$ then, namely
$$(n,m)\geq (k,l)\Longleftrightarrow (n-k,m-l)\in H_2$$
and the corresponding Granger causality can be defined by
$$
F^{LM}_{Y\to X}= \ln\frac{\sigma_{LM}}{\Sigma_{LM}},
$$
where
\begin{equation}\label{GC11}
\sigma_{LM}=\inf_{a_{kl}}\left\|X_{n+L,m+M}-\sum\nolimits_{(k,l)\in H_2}a_{kl}X_{n-k,m-l}\right\|
\end{equation}
and 
\begin{equation}\label{GC12}
\Sigma_{LM}=\inf_{b_{kl},c_{kl}}\left\|X_{n+L,m+M}-\sum\nolimits_{(k,l)\in H_2}b_{kl}X_{n-k,m-l}-\sum\nolimits_{(k,l)\in H_2}c_{kl}Y_{n-k,m-l}\right\|.
\end{equation}
Denoting   the (multivariable) scalar spectral factor of $S_{11}$ by $f_+$ and  the (multivariable) matrix spectral factor of $S$ by $S^+$, one can show that 
$$
\sigma_{LM}^2=\sum_{(0,0)\leq (k,l)<(L,M)}| C_{kl}\{f_+\}|^2
$$
and
$$
\Sigma_{LM}^2=\sum\limits_{(0,0)\leq (k,l)<(L,M)}\left( |C_{kl}\{S^+_{11}\}|^2+ |C_{kl}\{S^+_{12}\}|^2\right),
$$
similarly to \eqref{GC77} and \eqref{GC10}.  Therefore, similarly to \eqref{Ab44}, we have
$$
F^{LM}_{Y\to X}= \ln\frac{\sum_{(0,0)\leq (k,l)<(L,M)}| C_{kl}\{f_+\}|^2}{\sum_{(0,0)\leq (k,l)<(L,M)}\left( |C_{kl}\{S^+_{11}\}|^2+ |C_{kl}\{S^+_{12}\}|^2\right)}.
$$

\section{Acknowledgments}

The authors were supported in part by Faculty Research funding
from the Division of Science and Mathematics, New York University Abu Dhabi.
The first author was also partially supported by the Shota Rustaveli National Science Foundation of Georgia
(Project No. FR-18-2499).


\def\cprime{$'$}
\providecommand{\bysame}{\leavevmode\hbox to3em{\hrulefill}\thinspace}
\providecommand{\MR}{\relax\ifhmode\unskip\space\fi MR }
\providecommand{\MRhref}[2]{%
	\href{http://www.ams.org/mathscinet-getitem?mr=#1}{#2}
}
\providecommand{\href}[2]{#2}

\end{document}